\documentclass[12pt,draft]{amsart}
\usepackage[all]{xy}
\usepackage{amsfonts}
\usepackage{amssymb}

\usepackage{enumerate}

\usepackage{color}

\renewcommand{\le}{\leqslant}
\renewcommand{\ge}{\geqslant}

\textheight23cm \textwidth16.5cm \addtolength{\topmargin}{-25pt}
\evensidemargin0.cm \oddsidemargin0.cm

\newtheorem{teo}{Theorem}[section]
\newtheorem{lem}[teo]{Lemma}
\newtheorem{prop}[teo]{Proposition}

\newtheorem{cor}[teo]{Corollary}

\theoremstyle{definition}
\newtheorem{dfn}[teo]{Definition}
\newtheorem{rk}[teo]{Remark}

\def\<{\langle}
\def\>{\rangle}
\def\ss{\subset}

\def\a{\alpha}
\def\b{\beta}
\def\g{\gamma}
\def\d{\delta}

\def\l{{\lambda}}
\def\r{\rho}

\def\t{\tau}

\def\w{\omega}

\def\G{{\Gamma}}
\def\F{{\Phi}}

\def\C{{\mathbb C}}
\def\R{{\mathbb R}}
\def\Z{{\mathbb Z}}

\def\End{\mathop{\rm End}\nolimits}
\def\Ker{\mathop{\rm Ker}\nolimits}

\def\Aut{\operatorname{Aut}}
\def\Out{\operatorname{Out}}

\def\Id{\operatorname{Id}}

\def\Trace{\operatorname{Trace}}

\def\Inn{\operatorname{Inn}}

\def\Ind{\operatorname{Ind}}
\def\ind{\operatorname{ind}}

\def\1{\mathbf 1}

\def\ola#1{\stackrel{#1}{\longrightarrow}}

\newcommand{\ov}[1]{\overline{#1}}
\newcommand{\til}[1]{\widetilde{#1}}
\newcommand{\wh}[1]{\widehat{#1}}

\newcommand{\Mat}[4]{\left( \begin{array}{cc}
                            #1 & #2 \\
                            #3 & #4
                      \end{array} \right)}

\def\Fix{\operatorname{Fix}}

\def\N{{\mathbb N}}


\begin{document}

\title
{Twisted conjugacy classes in residually finite groups}

\author{Alexander Fel'shtyn}
\address{Instytut Matematyki, Uniwersytet Szczecinski,
ul. Wielkopolska 15, 70-451 Szczecin, Poland and School of
Mathematics, Institute for Advanced Study, Einstein Drive,
\newline
Princeton, NJ 08540 USA} \email{felshtyn@mpim-bonn.mpg.de,
felshtyn@ias.edu, fels@wmf.univ.szczecin.pl}

\author{Evgenij Troitsky}
\thanks{The second author is partially supported by
RFBR Grants  10-01-00257-a and 11-01-90413-Ykp-o$\!\!|$-a
and Grant  2010-220-01-077
(contract 11.G34.31.0005) of the Russian Government}
\address{Dept. of Mech. and Math., Moscow State University,
119991 GSP-1  Moscow, Russia}
\email{troitsky@mech.math.msu.su}
\urladdr{
http://mech.math.msu.su/\~{}troitsky}

\keywords{Reidemeister number, $R_\infty$-group, twisted conjugacy
class, Burnside-{Frobenius} theorem, (non) amenable group,
residually finite group, conjugacy separable group, unitary dual,
regular representation, matrix coefficient, weak containment,
twisted inner representation, rational representation,
isogredience classes, $S_\infty$-group, mapping class group, Gauss
congruences, Coxeter group, lattice, knot group, Golod-Shafarevich
group, positive rank gradient, polyfree group, braid group,
Artin's group, surface group, surface braid group, nilpotent
group, $C^*$-simple group, hyperbolic group, Reidemeister
spectrum, Osin group}
\subjclass[2000]{20C; 
20E45; 
22D10; 
22D25; 
22D30; 
37C25; 
43A20; 
43A30; 
46L; 
47H10; 
54H25; 
55M20
}

\begin{abstract}
We prove for residually finite groups the following long standing
conjecture: the number of twisted conjugacy classes ($g\sim h g
\phi(h^{-1})$) of an automorphism $\phi$ of a finitely generated
group is equal (if it is finite) to the number of finite
dimensional irreducible unitary representations being invariant
for the dual of $\phi$.

Also, we prove that any finitely generated residually finite
non-amenable group has the $R_\infty$ property (any automorphism
has infinitely many twisted conjugacy classes). This gives a lot
of new examples and covers many known classes of such
groups.
\end{abstract}

\maketitle

\section{Introduction}

The following two interrelated problems are among the principal
ones in the theory of twisted conjugacy (Reidemeister) classes in
infinite discrete groups. The first one is the 20-years-old
conjecture on existence of an appropriate twisted
Burnside-Frobenius theory (TBFT), i.e. identification of the
number $R(\phi)$ of Reidemeister classes and the number of fixed
points of the induced homeomorphism $\wh \phi$ on an appropriate
dual object (supposing $R(\phi)<\infty$). The second one is the
problem to outline the class of $R_\infty$ groups (that is
$R(\phi)=\infty$ for any $\phi$).

In this paper important advances in both problems are obtained.
Namely, first, it is proved that TBFT holds for finitely generated
residually finite groups (we take the finite-dimensional part of
the unitary dual as the dual space). Secondly, it is discovered
that finitely generated residually finite non-amenable groups are
$R_\infty$-groups. Several supplementary results of independent
interest are obtained.

The interest in twisted conjugacy relations has its origins, in
particular, in the Nielsen-Reidemeister fixed point theory (see,
e.g. \cite{Jiang,FelshB}), in Selberg theory (see, eg.
\cite{Shokra,Arthur}), and  Algebraic Geometry (see, e.g.
\cite{Groth}). In representation theory twisted conjugacy probably
occurs first in Gantmacher's paper \cite{Gantmacher} (see, e.g
\cite{Springer,Onishik-Vinberg}).

The problem of determining which classes of groups have the
$R_\infty$ property is an area of active research initiated in
\cite{FelHill}.
 In some
situations the $R_\infty$ property (for fundamental groups) has
direct topological consequences. For example, using this property
in \cite{GoWon09Crelle} for any $n\ge 4$ a compact nilmanifold
$M$, $\dim M=n$, is constructed, such that any homeomorphism
$f:M\to M$ is homotopic (for $n\ge 5$ isotopic) to a fixed point
free map. In \cite{DePe2009} it is shown that an infra-nilmanifold
that admits an Anosov diffeomorphism cannot have the $R_\infty$
property since the Reidemeister number of an Anosov diffeomorphism
on an infra-nilmanifold is always finite. Also, as we will see
below it can be used to detect the trivial knot.

The Reidemeister class of $g\in G$ we will denote by $\{g\}_\phi$:
$ \{g\}_\phi:=\{ x g \phi(x^{-1})\in G \: |\: x\in G\}. $ A
$\phi$-\emph{class function} is any function on $G$, which is
constant on Reidemeister classes, or, equivalently, which is
invariant under the twisted action $g \mapsto x g \phi(x^{-1})$ of
$G$ on itself.

\begin{dfn}
Denote by $\wh G$ the set of equivalence classes of unitary
irreducible representations of $G$ and by $\wh G_f$ its part
corresponding to finite-dimensional representations. The class of
$\r$ in $\wh G$ we will denote by $[\r]$. An automorphism $\phi$
of $G$ induces a bijection $\wh\phi:\wh G \to \wh G$ by the
formula $[\wh \phi (\r)]:=[\r\circ\phi]$.
\end{dfn}

The following (first) main theorem is proved as
Theorem \ref{teo:TBFTforRF} below.

\noindent \textbf{Theorem A.} \emph{ Let $\phi:G\to G$ be an
automorphism of a residually finite finitely generated group $G$
with $R(\phi)<\infty$. Then $R(\phi)$ is equal to the number of
$\wh\phi$-fixed points on $\wh G_f$.}

This theorem covers the known case of
almost polycyclic groups \cite{polyc} and
contains f.g. metabelian groups and restricted
wreath products. Some known examples of $\phi$
with $R(\phi)<\infty$ are listed in Section \ref{sec:AppList}.

In the case of $|G|<\infty$ and $\phi=\Id$ it becomes the
celebrated Burnside-Frobenius theorem, which says that the number
of conjugacy classes is equal to the number of equivalence classes
of irreducible unitary representations of $G$. In \cite{FelHill}
it was discovered that this statement remains true for any
automorphism $\phi$ of any finite group $G$. Indeed, $R(\phi)$ is
equal to the dimension of the space of $\phi$-class functions on
this group. Hence, by Peter-Weyl theorem, $R(\phi)$ is identified
with the sum of dimensions $d_\r$ of twisted invariant elements of
$\End(H_\r)$, where $\r$ runs over $\wh G$, and the space of a
representation $\r$ is denoted by $H_\r$. By the Schur lemma,
$d_\r=1$, if $\r$ is a fixed point of $\wh\phi$, and is zero
otherwise. Hence, $R(\phi)$ coincides with the number of fixed
points of $\wh\phi$.

Theorem A has important dynamical consequences (see
\cite{FelshB,FelTro} for an extended discussion). In particular,
suppose, $R(\phi^n)<\infty$, $n\ge 1$, in Theorem A. Then we have
for all $n$ the following \emph{Gauss congruences for the
Reidemeister numbers}, which are important for the theory of the
Reidemeister zeta function:
\begin{equation}\label{eq:congru}
 \sum_{d\mid n} \mu(d)\cdot R(\phi^{n/d}) \equiv 0 \mod n,
\end{equation}
where $\mu(d)$, $d\in\N$, is the {\em M\"obius function}.
Indeed, apply the M\"obius' inversion formula
to the following evident consequence of Theorem A:
$
 R(\phi^n) = \# \: Fix(\wh\phi^n|_{\wh G_f})
 =\sum_{d\mid n} P_d,
$ where $P_n$ denote the number of periodic points of
$\widehat{\phi}$ on $\wh G_f$ of least period $n$. We obtain
(\ref{eq:congru}) with $P_n$ on the right. But
 $P_n$ is always divisible
by $n$, because $P_n$ is exactly $n$ times the number of
orbits of length $n$.

Formula (\ref{eq:congru}) was previously known  in the   special
case of almost polycyclic groups (see \cite{FelTro,polyc}, where
more detail can be found).
 The history of the Gauss congruences for
 integers can be found in
\cite{Zarelua2008Steklo}.
 Gauss congruences for Lefschetz numbers of
 iterations of a continuous map were proved by
 A. Dold \cite{Dold83Inv}.

The second main theorem (proved as Corollary \ref{cor:mainrinf}
below) is as follows.

\noindent
\textbf{Theorem B.} \emph{
Any non-amenable residually finite finitely
generated group is an $R_\infty$ group.}

\noindent (It is enforced in Prop. \ref{prop:usilennonamen}).

\smallskip
This gives a lot of new examples of groups
with $R_\infty$-property. (A list of known
cases of $R_\infty$ and non-$R_\infty$ groups
can be find in  Section \ref{sec:AppList},
to not overload the Introduction.)

Among the new classes we would like to emphasize
the following important ones.

\begin{enumerate}
\item Irreducible infinite non-affine finetely
generated \emph{Coxeter groups.} Indeed,
they are linear \cite{Vinberg71IAN},
hence residually finite \cite{malcev}.
They contain a subgroup which maps onto a
non-amenable group \cite{MargulVinb2000JLieT},
thus they are non-amenable.

\item Lattices (finitely generated
cocompact discrete subgroups) $G$ in direct products
$G_1\times\dots\times G_m$ of linear groups over fields
$k_1,\dots,k_m$ of zero characteristic, where at least one of
$G_i$, say $G_1$, is a non-compact semi-simple Lie group. Indeed,
$G_1$ is non-amenable by \cite{Furstenberg} (see \cite[p.
432]{BekkaHarpeValetteT}), hence $G_1\times\dots\times G_m$ is
non-amenable, and $G$ is non-amenable (see, e.g. \cite[Corollary
G.3.8]{BekkaHarpeValetteT}). By the same argument as in the
previous item, $G$ is residually finite. This class contains
$S$-arithmetic lattices, in particular groups from
(\ref{it:semisla}), Sect. \ref{sec:AppList}.

\item \emph{Fundamental
groups of compact 3-dimensional manifolds}
and \emph{knot groups} with few exceptions.
They are finitely generated and
residually finite by \cite{Hempel} and Perelman's proof
of Thurston's geometrization conjecture \cite{Thurston}.
More precisely, the \emph{JSJ
decomposition} (see, e.g. \cite{Hempel3mds,Scott1983BLMS})
gives parts,
which are either Seifert manifolds, or hyperbolic.
The fundamental group is a graph product of
fundamental groups of these parts.
Thus, by Theorem B, the group
under consideration is $R_\infty$ group if
it is non-amenable. Such groups are characterized
geometrically (via bounded cohomology \cite{NIvanov})
in \cite{FujiOhshi2002,Fujiwara98}.

In particular, via
existence/non-existence of
$R_\infty$ property for knot groups
we can detect unknot.
Indeed, since the trivial knot's
group is abelian, it is not $R_\infty$, while the
other ones have infinite-dimensional second bounded
cohomology
\cite{FujiOhshi2002,Fujiwara98}.
Thus, they are non-amenable and $R_\infty$ groups
by Theorem B.

\item Several important classes are among
non-linear groups:

\begin{enumerate}[(a)]
\item Periodic Golod-Shafarevich groups
are finitely generated residually finite
non-amenable (having quotients with property (T))
\cite{Ershov2011}
(see also  \cite{ErshovSurv}). A linear non-ame\-n\-able group
contains $F_2$ (Tits alternative,
\cite[pp. 145--146]{WehrfritzLinearGroups}).
Thus, periodic Golod-Shafarevich groups
are non-linear.

\item Similarly,
finitely generated infinite torsion groups with positive rank
gradient from \cite{OsinGradient} or positive power $p$-deficiency
from \cite{SchPuc} (see also \cite[Sect. 9]{ErshovSurv}) are
$R_\infty$ groups.

\item $\Aut(F_n)$, $n\ge 3$, is finitely generated
\cite{Nielsen1924}, residually finite (Baumslag, see \cite[Sect.
2]{BassLubotzky83IsraelJM}), non-amenable  and non-linear
\cite{FormanekProcessi1992}. The same is true for ${\rm
Out}\,(F_n)$, $n\ge 4$ (see e.g. \cite{Vogtmann2002}).
\end{enumerate}
\end{enumerate}

Also, the theorem covers iterated semidirect products
$F_{d_l}\rtimes \dots\rtimes F_{d_1}$ of free groups and its
finite extensions (poly-fg-free-by-finite groups), because they
are residually finite (see, e.g. \cite[Theorem 7, p.
29]{mille-71}) and evidently non-amenable (having a non-amenable
subgroup) and finitely generated. This class contains Artin's pure
braid groups, the fundamental group of the complement of any
affine fiber-type hyperplane arrangement and some others, see e.g.
\cite{CohenSuciu,Meier}.

On the other hand, the new result covers
(up to the conjecture about the residually finiteness)
the most extended known class, non-elementary
Gromov hyperbolic groups (\ref{it:gromov}) in
Sect. \ref{sec:AppList}.

Some other new classes
are discussed in Section \ref{sec:AppList}.

In Section \ref{sec:fixpoiReid} we establish important
inequalities, relating the number of fixed points of $\phi$ and
$R(\phi)$. We prove Theorem A in Section \ref{sec:TBFT} and
Theorem B in Section \ref{sec:amentwistinnrinfty}. In Section
\ref{sec:rationalpoints} we introduce (ir)rational representations
and describe their connection with twisted conjugacy classes. In
Section \ref{sec:isogred} we introduce and investigate a new class
of groups: $S_\infty$ groups.

\smallskip
\textbf{Acknowledgments.} The present paper is a part of our joint
research programm in Max-Planck-Institut f\"ur Mathematik (MPIM)
in Bonn. We would like to thank the MPIM for its kind support and
hospitality while the greater part of this work was completed. The
first author is indebted to the Institute for Advanced Study,
Princeton, and Institut des Hautes \'Etudes Scientifiques
(Bures-sur-Yvette) for for their support and hospitality and the
possibility of the present research during his visit there.

   The authors are grateful to
V. Manuilov, A. Shtern, and A. Vershik
   for important discussions on non-commutative harmonic analysis
   aspects of the present research and to
M. Abert, V. Bardakov, P. Bellingeri, D. Gon\c{c}alves,
   R. Grigorchuk, M. Kapovich, V. Nekrashevich,
   D. Osin, G. Prasad, and M. Sapir
   for helpful
comments and   valuable suggestions on different group-theoretic
questions. The paper was partially inspired by the excellent
survey \cite{Harpe2007simple} by P. de la Harpe and the
interesting paper \cite{Jabara} by E. Jabara.

\section{Preliminary Considerations}\label{sec:prelim}
We start from the following estimation (supposed to be known to
E.~Landau (cf. \cite{Landau1903}) and R.~Brauer (cf. \cite{Brauer63}))
(cf. \cite{NewmanMorris}).
\begin{lem}\label{lem:estimnumbtheor}
Let $x_1\le x_2 \le \dots \le x_n$ be positive integers such that
$\sum_{i=1}^n (x_i)^{-1}=1$. Then $x_n \le n^{2n-1}$.
\end{lem}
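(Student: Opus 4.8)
The plan is an induction on $n$, resting on the elementary observation that once the smaller denominators are fixed, the tail of the sum is squeezed between $1/x_k$ and $(n-k+1)/x_k$. For $n=1$ the only solution is $x_1=1$, so assume $n\ge2$; then $x_1\ge2$, and averaging gives $1=\sum_{i=1}^n x_i^{-1}\le n/x_1$, i.e. $x_1\le n$. More generally, set $s_{k-1}:=\sum_{i=1}^{k-1}x_i^{-1}$ (so $s_0=0$); using $x_i\ge x_k$ for $i\ge k$, the identity $1-s_{k-1}=\sum_{i=k}^n x_i^{-1}$ yields
\[
 1-s_{k-1}\le\frac{n-k+1}{x_k},\qquad\text{so}\qquad x_k\le\frac{n-k+1}{1-s_{k-1}} .
\]
Thus each denominator is controlled by how far the preceding partial sum stays below $1$.

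The crux is to bound this gap from below, and this is the one place where integrality of the $x_i$ is essential. Clearing denominators, $s_{k-1}$ is a fraction whose denominator divides $\prod_{i=1}^{k-1}x_i$, and since $s_{k-1}<1$ its numerator is at most $\prod_{i<k}x_i-1$; hence
\[
 1-s_{k-1}\ge\frac{1}{\prod_{i=1}^{k-1}x_i}.
\]
Substituting into the previous estimate gives the recursion
\[
 x_k\le(n-k+1)\prod_{i=1}^{k-1}x_i\le n\prod_{i=1}^{k-1}x_i .
\]

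Finally I would resolve the recursion. Assuming inductively $x_i\le n^{2^{i-1}}$ for $i<k$ (the base case being $x_1\le n=n^{2^0}$), we get $\prod_{i=1}^{k-1}x_i\le n^{\,2^{k-1}-1}$, and therefore $x_k\le n\cdot n^{\,2^{k-1}-1}=n^{2^{k-1}}$. Taking $k=n$ yields the asserted bound $x_n\le n^{2^{n-1}}$. I expect the only delicate point to be the lower estimate $1-s_{k-1}\ge(\prod_{i<k}x_i)^{-1}$, i.e. the denominator-clearing step that prevents the partial sums from approaching $1$ too quickly; the averaging inequality and the exponent bookkeeping $\sum_{i=1}^{k-1}2^{i-1}=2^{k-1}-1$ are then entirely routine. (One can sharpen the recursion, replacing the crude $\prod_{i<k}x_i$ by the Sylvester-type product $a_1\cdots a_{k-1}$, but the weaker bound above already suffices.)
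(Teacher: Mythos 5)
Your proof is correct and essentially identical to the paper's: the same averaging bound $x_1\le n$, the same integrality estimate $1-s_{k-1}\ge (x_1\cdots x_{k-1})^{-1}$ (the paper phrases it via a positive integer $y_r$ with $1-\sum_{i\le r}x_i^{-1}=y_r/(x_1\cdots x_r)$), the same recursion $x_k\le (n-k+1)\,x_1\cdots x_{k-1}\le n\,x_1\cdots x_{k-1}$, and the same inductive resolution. One remark: what both arguments actually prove is $x_n\le n^{2^{n-1}}$, exactly as you state --- the exponent $2n-1$ printed in the lemma is a typo for $2^{n-1}$, and the printed bound is even false for large $n$ (for $n=7$, Sylvester's decomposition $1/2+1/3+1/7+1/43+1/1807+1/3263443+1/(3263443\cdot 3263442)=1$ has largest denominator about $1.1\times 10^{13}$, which exceeds $7^{13}\approx 9.7\times 10^{10}$).
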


\begin{proof}
One has
\begin{equation}\label{eq:estimnumbtheor1}
    x_1\le n,
\end{equation}
since otherwise $\sum_{i=1}^n \frac 1{x_i}<\sum_{i=1}^n \frac 1n = 1$.
We can write
with some positive integers $y_r$ for $1\le r \le n-1$:
$$
\frac{n-r}{x_{r+1}}\ge \sum_{i=r+1}^n \frac 1{x_i} =
1- \sum_{i=1}^r \frac 1{x_i} =\frac{y_r}{x_1x_2 \dots x_r} \ge
\frac{1}{x_1x_2 \dots x_r},
$$
\begin{equation}\label{eq:estimnumbtheor2}
    n x_1x_2 \dots x_r \ge (n-r)x_1x_2 \dots x_r \ge x_{r+1}.
\end{equation}
By induction, (\ref{eq:estimnumbtheor1}) and (\ref{eq:estimnumbtheor2})
imply the statement.
\end{proof}
With some additional efforts the estimation can be improved
(cf. e.g. \cite{NewmanMorris}) but we do not need this.
Let us note that, since $\log_2 n < n$, one has
\begin{equation}\label{eq:sravnnnilog}
    n^{n-1} <n^n =2^{n \log_2 n} < 2^{n^2}.
\end{equation}

Denote the stabilizers related to the twisted action of $G$
on itself by
$$
St^{tw}_{\phi}(g,h):=\{k\in \G \: | \: k g \phi(k^{-1})=h\},
\qquad St^{tw}_{\phi}(g):=St^{tw}_{\phi}(g,g).
$$
In particular,
$$
St^{tw}_{\phi}(e)=\{k\in \G \: | \: k \phi(k^{-1})=e\}
= C_G(\phi)
$$
(fixed elements of $\phi$).
Evidently, $St^{tw}_{\phi}(g,h)=\varnothing$ if $h \not\in \{g\}_\phi$.
Otherwise
$$
St^{tw}_{\phi}(g, s g \phi(s^{-1}))=
\{k\in \G \: | \: k g \phi(k^{-1})=s g \phi(s^{-1})\}
=
\{k\in \G \: | \: s^{-1} k g \phi(k^{-1}s)= g  \},
$$
i.e. $St^{tw}_{\phi}(g, s g \phi(s^{-1}))=s\cdot St^{tw}_{\phi}(g)$
is a coset of this group. Thus
\begin{equation}\label{eq:stabilcoset}
|St^{tw}_{\phi}(g, s g \phi(s^{-1}))|= |St^{tw}_{\phi}(g)|.
\end{equation}

An evident consequence of the above statement is the
following lemma (see \cite[Lemma 4]{Jabara}).

\begin{lem}\label{lem:jabarafinoz}
Let $|G|<\infty$, $r=R(\phi)$, $\phi:G\to G$.
Then $|C_G(\phi)|\le r^{r-1}$.
\end{lem}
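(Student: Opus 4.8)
The plan is to reduce the statement to the purely arithmetic Lemma \ref{lem:estimnumbtheor}. The twisted conjugation $g\mapsto k g\phi(k^{-1})$ is an action of $G$ on itself whose orbits are exactly the $r=R(\phi)$ Reidemeister classes and whose point stabilizers are the subgroups $St^{tw}_{\phi}(g)$; closure is the one-line check $(k_1k_2)\,g\,\phi((k_1k_2)^{-1})=k_1\big(k_2 g\phi(k_2^{-1})\big)\phi(k_1^{-1})=g$ for $k_1,k_2\in St^{tw}_{\phi}(g)$, so that the orbit--stabilizer relation applies. By (\ref{eq:stabilcoset}) the order $x_{[g]}:=|St^{tw}_{\phi}(g)|$ is constant on each class, whence $|\{g\}_\phi|=|G|/x_{[g]}$.

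First I would pick representatives $g_1=e,g_2,\dots,g_r$ of the $r$ classes. Since these classes partition $G$, summing their cardinalities yields $|G|=\sum_{i=1}^r |G|/x_i$, and therefore
\[
\sum_{i=1}^{r}\frac{1}{x_i}=1,\qquad x_i:=|St^{tw}_{\phi}(g_i)|\in\N .
\]
Because $St^{tw}_{\phi}(e)=C_G(\phi)$, the distinguished integer $c:=|C_G(\phi)|$ occurs among the $x_i$. Thus $x_1,\dots,x_r$ meet the hypotheses of Lemma \ref{lem:estimnumbtheor} with $n=r$, and feeding them in bounds $\max_i x_i$, and in particular $c$, by a function of $r$.

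The hard part will be extracting the sharp exponent $r-1$. Applied as stated, Lemma \ref{lem:estimnumbtheor} only yields the cruder bound $c\le\max_i x_i\le r^{2r-1}$, and in fact the bare identity $\sum 1/x_i=1$ with $x_i\in\N$ cannot confine an individual denominator below $r^{r-1}$: Sylvester-type expansions $1=\tfrac12+\tfrac13+\tfrac17+\cdots$ drive the largest $x_i$ up doubly exponentially in $r$. To reach $r^{r-1}$ one must therefore bring in the group-theoretic constraint that every $x_i$ divides $|G|$ --- equivalently that every orbit length $|G|/x_i$ is a positive integer --- which sharply restricts the admissible tuples $(x_1,\dots,x_r)$ and reflects that $c=|C_G(\phi)|$ is a subgroup order, not an arbitrary denominator. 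Marshalling this divisibility to control the distinguished value $c$, rather than settling for the coarse maximum furnished by Lemma \ref{lem:estimnumbtheor}, is where the genuine difficulty lies.
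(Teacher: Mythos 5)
Your reduction is exactly the paper's: partition $G$ into the $r$ twisted orbits, use orbit--stabilizer together with (\ref{eq:stabilcoset}), divide the class equation by $|G|$ to get $\sum_{i=1}^r 1/x_i=1$ with $x_i=|St^{tw}_{\phi}(g_i)|$, and appeal to Lemma \ref{lem:estimnumbtheor}. But you then stop, explicitly leaving the production of the exponent $r-1$ as ``the genuine difficulty,'' so as a proof the proposal is incomplete. What makes your diagnosis valuable is that the obstruction you name is real and it applies verbatim to the paper's own proof, which concludes $\max_i x_i\le r^{r-1}$ as if that were what Lemma \ref{lem:estimnumbtheor} delivers. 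It is not: that lemma states the bound $n^{2n-1}$; its proof, iterating (\ref{eq:estimnumbtheor2}), actually establishes only $x_n\le n^{2^{n-1}}$; and your Sylvester observation, pushed one step further than you push it, refutes the stated bound itself. Indeed, with $a_1=2$, $a_{k+1}=a_k^2-a_k+1$, the seven-term expansion
\[
1=\frac12+\frac13+\frac17+\frac1{43}+\frac1{1807}+\frac1{3263443}+\frac1{3263442\cdot 3263443}
\]
has largest denominator exceeding $10^{13}$, while $7^{2\cdot 7-1}=7^{13}<10^{11}$. So you cannot consistently both accept the bound $r^{2r-1}$ from the lemma and invoke Sylvester growth: the latter contradicts the former, and no argument using only $\sum 1/x_i=1$ can give any bound of the shape $r^{Cr}$.

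The repair you propose --- exploiting that each $x_i$ divides $|G|$ --- cannot close the gap either, and the same example shows why: since $a_{k+1}-1=a_k(a_k-1)$, induction gives $a_{k+1}-1=a_1a_2\cdots a_k$, so in the Sylvester tuple every denominator divides the largest one; taking $|G|$ equal to that largest denominator, all Lagrange/orbit-size integrality constraints are satisfied. Hence ``class equation plus divisibility'' is still compatible with doubly exponential stabilizers, and only genuinely group-theoretic input could do better. Your hope of controlling the distinguished value $c=|C_G(\phi)|$ rather than the maximum also buys nothing: for $\phi=\Id$ one has $C_G(\Id)=G$, which \emph{is} the largest stabilizer, and Lemma \ref{lem:jabarafinoz} then asserts $|G|\le k^{k-1}$ with $k$ the number of ordinary conjugacy classes --- a Landau--Brauer type estimate that nothing in this paper establishes (the exponent $r-1$ rests on the citation of \cite{Jabara}, not on the argument given here). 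What your reduction, and the paper's, honestly proves is $|C_G(\phi)|\le r^{2^{r-1}}$. It is worth recording that this weaker bound suffices for every later use: in Theorem \ref{teo:FixJabara} one only needs some explicit increasing unbounded lower bound for $R_G(\psi)$ in terms of $|C_G(\psi)|$, and $|C_G(\psi)|\le r^{2^{r-1}}$ yields $\log_2\log_2|C_G(\psi)|<2R_G(\psi)$, which can replace (\ref{eq:FixFinJabara}) with no change to the rest of that proof.
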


\begin{proof}
Let $\{g_i\}_\phi$ be distinct Reidemeister classes, $i=1,\dots,r$,
$g_1=e$.
Then
$$
|G|=\sum_{i=1}^r \# \{g_i\}_\phi=
\sum_{i=1}^r \frac {|G|}{|St^{tw}_{\phi}(g_i)|}.
$$
Dividing by $|G|$ and applying Lemma \ref{lem:estimnumbtheor}
we obtain
$$
|C_G(\phi)|=|St^{tw}_{\phi}(e)|\le \max_i |St^{tw}_{\phi}(g_i)| \le r^{r-1}.
$$
\end{proof}

The following fact is well known.

\begin{lem}\label{lem:normalcharfiniteind}
Let $G$ be finitely generated, and $H'\ss G$
its subgroup of finite index. Then there is a characteristic
subgroup $H\ss G$ of finite index, $H\ss H'$.
\end{lem}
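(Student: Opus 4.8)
The plan is to realize $H$ as the intersection of all subgroups of $G$ sharing the index of $H'$, using finite generation to guarantee that this intersection still has finite index. Writing $n:=[G:H']$, the key step I would establish first is that a finitely generated group has only finitely many subgroups of any prescribed finite index $n$. To see this, I would associate to each subgroup $K\ss G$ with $[G:K]=n$ the transitive left action of $G$ on the coset space $G/K$; labeling the $n$ cosets so that $K$ itself carries the label $1$ yields a homomorphism $G\to S_n$ whose stabilizer of the point $1$ is exactly $K$. Since $G$ is generated by finitely many elements---say $d$ of them---any homomorphism $G\to S_n$ is pinned down by the images of these generators, so there are at most $(n!)^d$ of them, and hence only finitely many index-$n$ subgroups of $G$.

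With this in hand, I would set $H:=\bigcap_{[G:K]=n} K$, the intersection running over the (finitely many) subgroups $K\ss G$ of index $n$. Being a finite intersection of finite-index subgroups, $H$ has finite index in $G$, and $H\ss H'$ holds automatically because $H'$ is itself among the $K$ appearing in the intersection.

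Finally I would verify that $H$ is characteristic. Any $\phi\in\Aut(G)$ sends a subgroup of index $n$ to a subgroup of index $n$, since $\phi$ induces a bijection $G/K\to G/\phi(K)$ and therefore preserves the index. Thus $\phi$ permutes the finite family $\{K : [G:K]=n\}$ and consequently fixes its intersection, giving $\phi(H)=H$. The one substantive point---and the only place where finite generation of $G$ is actually used---is the finiteness of the set of index-$n$ subgroups; once that is in place, the rest of the argument is purely formal.
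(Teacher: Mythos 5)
Your proof is correct and follows essentially the same route as the paper: take $H$ to be the intersection of all subgroups of index $n=[G:H']$, which is finite-index and characteristic because a finitely generated group has only finitely many subgroups of a given index and automorphisms permute them. The only difference is that you prove this finiteness fact yourself (via the coset action $G\to S_n$), whereas the paper simply cites it to Hall and Kurosh; your argument for it is the standard one and is correct.
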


\begin{proof}
Since $G$ is finitely generated, there is only finitely many
subgroups of the same index as $H'$
(see \cite{Hall}, \cite[\S\ 38]{Kurosh}).
Let $H$ be their intersection. Then $H$ is characteristic, in
particular normal, and of finite index.
\end{proof}

Let us denote by $\t_g:G\to G$ the automorphism $\t_g(\til g)=g\til g\,g^{-1}$
for $g\in G$. Its restriction to a normal subgroup we will denote by $\t_g$
as well.

\begin{lem}\label{lem:redklassed}
$\{g\}_\phi k=\{g\,k\}_{\t_{k^{-1}}\circ\phi}$.
\end{lem}

\begin{proof}
Let $g'=f\,g\,\phi(f^{-1})$ be $\phi$-conjugate to $g$. Then
$$
g'\,k=f\,g\,\phi(f^{-1})\,k=f\,g\,k\,k^{-1}\,\phi(f^{-1})\,k
=f\,(g\,k)\,(\t_{k^{-1}}\circ\phi)(f^{-1}).
$$
Conversely, if $g'$ is $(\t_{k^{-1}}\circ\phi)$-conjugate to $g$, then
$$
g'\,k^{-1}=
f\,g\,(\t_{k^{-1}}\circ\phi)(f^{-1})k^{-1}=
f\,g\,k^{-1}\,\phi(f^{-1}).
$$
Hence a shift maps $\phi$-conjugacy classes onto classes related to
another automorphism.
\end{proof}

\begin{cor}\label{cor:rphiequaltaurphi}
$R(\phi)=R(\t_g \circ \phi)$.
\end{cor}

If $H\ss G$ is a $\phi$-invariant and normal, the quotient map
induces an epimorphism of Reidemeister classes, in particular,
\begin{equation}\label{eq:epiRade}
    R_G(\phi) \ge R_{G/H}(\ov{\phi})
\end{equation}
for the induced automorphism $\ov{\phi}:G/H \to G/H$ (see
\cite{goncalves,go:nil1,gowon}
and \cite[Sect. 3]{polyc}  for details and refinements).

\begin{dfn}\label{dfn:resfin}
A group $G$ is called \emph{residually finite}
if for any finite set $K\ss G$
there exists a normal group $H$ of finite index
such that $H\cap K=\varnothing$. Taking $K$ formed
by $g_i^{-1}g_j$ for some finite set $K_0=\{g_1,\dots,g_s\}$
one obtains an epimorphism $G\to G/H$ onto a finite group,
which is injective on $K_0$.
\end{dfn}

\begin{rk}\label{rk:fgrfmainprop}
If a residually finite group $G$ is finitely generated,
then groups $H$ in the definition
can be supposed to be characteristic
(by Lemma \ref{lem:normalcharfiniteind}).
\end{rk}

We will remind now some facts from Representation
Theory and Harmonic Analysis (see \cite{DixmierEng}
and \cite{BekkaHarpeValetteT} for an effective
introduction).
(Left) \emph{regular representation} $\l_G$ is the
unitary representation of $G$ on $\ell^2(G)$ by left
translations. The completion $C^*_\l(G)$
of $\ell^1(G)$ by the norm of $B(\ell^2(G)$
is called \emph{reduced group $C^*$-algebra} of $G$.
The completion $C^*(G)$
of $\ell^1(G)$ by the norm of all
unitary representations
is called \emph{(full) group $C^*$-algebra} of $G$.
The algebra $C^*_\l(G)$ is a quotient of $C^*(G)$.

Non-degenerate representations of $C^*(G)$ are exactly unitary
representations of $G$, in particular, $\wh {C^*(G)}=\wh G$. For a
representation $\r$ of $G$ we denote by $C^*\r$ the corresponding
representation of $C^*(G)$, and by $C^*\Ker \r$ the kernel of $C^*
\r$. One introduces on $\wh G$ the \emph{Jacobson-Fell} or
\emph{hull-kernel} topology defining the closure of a set $X$ by
the following formula
$$
\overline{X}=\{[\rho]\: :\: C^*\text{Ker} \rho \supseteq
\bigcup_{[\pi]\in X} C^*\text{Ker} \pi\}.
$$
This topology can be described in terms of
\emph{weak containment}: a representation $\r$ is
weakly contained in representation $\pi$
(we write $\r \prec\pi$)
if
diagonal matrix coefficients of $\r$ can be
approximated by linear
combinations of diagonal matrix coefficients of $\pi$
uniformly on finite sets. Here a \emph{matrix coefficient}
of a representation $\r$ on a Hilbert space $H$ is
the function $g\mapsto \<\r(g)\xi,\eta\>$ on $G$ for some
fixed $\xi,\eta\in H$, and a \emph{diagonal} one corresponds
to $\xi=\eta$. Then $C^*\Ker \pi \ss C^*\Ker \r$ if
and only if $\r \prec\pi$. Since
$$
C^*\Ker(\r_1\oplus\dots\oplus\r_m)=\cap_{i=1}^m C^*\Ker \r_i,
$$
\begin{equation}\label{eq:weakcontsum}
    \r_1\oplus\dots\oplus\r_m \prec \pi \mbox{ if } \r_i \prec \pi,
    \quad i=1,\dots,m.
\end{equation}

An \emph{amenable group} may be characterized in several
equivalent ways (see e.g. \cite{BekkaHarpeValetteT}), in
particular:
\begin{itemize}
    \item There exists an invariant mean on $\ell^\infty(G)$.
    \item $1_G \prec \l_G$, where $1_G$ is the trivial
    1-dimensional representation.
    \item $C^*(G)=C^*_\l(G)$.
\end{itemize}

A group $G$ has \emph{property (T)} if the trivial 1-dimensional
representation $1_G$ is an isolated point of $\wh G$ and is
$C^*$-\emph{simple} if $\rho \prec \l$ implies $\l\prec \rho$ for
any $\rho$ \cite{BekkaHarpeValetteT,Harpe2007simple}.

\begin{dfn}
A function $f:G\to\C$ is called \emph{positively definite}
function if for any finite collection $\{g_1,\dots,g_s\}\ss G$
the matrix $\|f(g_i^{-1}g_j)\|$ is non-negatively definite.
\end{dfn}

\begin{dfn}
\emph{Fourier-Stieltijes algebra} $B(G)$ can be defined in
three equivalent ways:
\begin{enumerate}
    \item $B(G)$ is formed by finite linear combinations
    of positively definite functions;
    \item $B(G)$ is formed by all matrix coefficients of
    all unitary representations of $G$;
    \item $B(G)=C^*(G)'$, i.e. it is the Banach space dual
    to $C^*(G)$.
\end{enumerate}
\end{dfn}

\begin{rk}\label{rk:separcoefffdim}
Matrix coefficients of distinct finite-dimensional representations
are linear independent (see \cite[Corollary (27.13)]{CurtisReiner}
for an algebraic argument). Another argument uses the fact that
these representations are disjoint having maximal ideals as
kernels. Passing to the weak containment interpretation gives the
linear independence.
\end{rk}

Let us remind now some facts from \cite{FelTro,polyc}.
\begin{lem}\label{lem:invarfunccodimetc}
Let $\rho$ be a finite dimensional irreducible representation
of $G$ on $V_\rho$, and $\phi:G\to G$ is an automorphism.

1). There exists a twisted invariant function $\w:G\to \C$ being
a matrix
coefficient of $\rho$ if and only if $\wh\phi[\rho]=[\rho]$.

2). In this case such $\w$ is unique up to scaling.

3). The space $K_\r:=\{b \in \End V_\rho\: |\: b=a-\r(g) a
\r(\phi(g^{-1})$ for some $g\in G$ and some $a  \in \End V_\rho\}$
has codimension 1 if $\wh\phi[\rho]=[\rho]$ and coincides with
$\End V_\rho$ otherwise.

4). If we have several distinct $\wh\phi$-fixed classes, then
the correspondent twisted invariant functions are independent.
\end{lem}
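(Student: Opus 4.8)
The plan is to identify the space of matrix coefficients of $\rho$ with $\End V_\rho$ and reduce everything to Schur's lemma together with nondegeneracy of the trace form. Every matrix coefficient of $\rho$ can be written as $\omega_A(g)=\tr(\rho(g)A)$ for a unique $A\in\End V_\rho$: the map $A\mapsto\omega_A$ is linear and injective, because the operators $\{\rho(g)\}_{g\in G}$ span $\End V_\rho$ (Burnside/Jacobson density for a finite-dimensional irreducible complex representation) while the pairing $(X,Y)\mapsto\tr(XY)$ is nondegenerate. So I would first compute the twisted-invariance condition on $\omega_A$. From $\omega_A(xg\phi(x^{-1}))=\tr\!\big(\rho(g)\,\rho(\phi(x^{-1}))A\,\rho(x)\big)$ (cyclicity of the trace) and again the spanning property of $\{\rho(g)\}$, I get that $\omega_A$ is twisted invariant if and only if $\rho(\phi(x^{-1}))A\rho(x)=A$ for all $x$, i.e.
\[
A\,\rho(x)=\rho(\phi(x))\,A\qquad(x\in G),
\]
which says exactly that $A\in\Hom_G(\rho,\rho\circ\phi)$.

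With this reformulation, parts 1) and 2) are immediate from Schur's lemma. Both $\rho$ and $\rho\circ\phi$ are irreducible, so $\Hom_G(\rho,\rho\circ\phi)$ is $1$-dimensional when $\rho\cong\rho\circ\phi$ and $0$ otherwise; and $\rho\cong\rho\circ\phi$ is precisely $[\rho\circ\phi]=[\rho]$, i.e. $\wh\phi[\rho]=[\rho]$ by the definition of $\wh\phi$. Hence a nonzero twisted-invariant matrix coefficient exists iff $\wh\phi[\rho]=[\rho]$ (for the ``if'' direction a nonzero intertwiner $A$ yields $\omega_A\neq0$, since $\tr(\rho(g)A)\neq0$ for some $g$ by the spanning property), and when it exists it is unique up to scaling because $A\mapsto\omega_A$ is injective with $1$-dimensional parameter space.

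For part 3) I would read $T_g(a):=\rho(g)\,a\,\rho(\phi(g^{-1}))$ as a linear left action of $G$ on $\End V_\rho$ (indeed $T_gT_h=T_{gh}$), so that the generators of $K_\rho$ are the vectors $(\Id-T_g)(a)$. Computing the adjoint with respect to the trace form gives $T_g^{\ast}(b)=\rho(\phi(g^{-1}))\,b\,\rho(g)$, and $b$ is fixed by every $T_g^{\ast}$ exactly when $b\rho(g)=\rho(\phi(g))b$, i.e. exactly when $b\in\Hom_G(\rho,\rho\circ\phi)$ — the same space as before. Since the trace form is nondegenerate, the orthogonal complement of the linear span of $K_\rho$ is this fixed-point space, whence
\[
\operatorname{codim}\langle K_\rho\rangle=\dim\Hom_G(\rho,\rho\circ\phi)=
\begin{cases}
1,&\wh\phi[\rho]=[\rho],\\
0,&\text{otherwise}.
\end{cases}
\]
This gives codimension $1$ in the fixed case and $\langle K_\rho\rangle=\End V_\rho$ otherwise. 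Part 4) is then immediate from Remark \ref{rk:separcoefffdim}: each $\omega_i$ is a nonzero matrix coefficient of $\rho_i$, the $\rho_i$ are pairwise inequivalent, and matrix coefficients of distinct finite-dimensional irreducibles are linearly independent.

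The step I expect to be most delicate is the precise reading of $K_\rho$ in part 3). As literally written, $K_\rho$ is the union $\bigcup_{g}\Im(\Id-T_g)$, whereas the clean trace-form duality above controls only its linear span $\langle K_\rho\rangle$. If $K_\rho$ is intended as that span — which I take to be the natural reading — the computation is complete. If the union is meant literally, one must still check that it is a subspace coinciding with $\langle K_\rho\rangle$: in the non-fixed case that the vectors $(\Id-T_g)(a)$ already exhaust $\End V_\rho$ (for instance by exhibiting a single $g$ with $\Id-T_g$ invertible, i.e. $\rho(g)$ and $\rho(\phi(g))$ of disjoint spectrum), and in the fixed case that they fill the hyperplane $\{b:\tr(bA_0)=0\}$ cut out by a generator $A_0$ of $\Hom_G(\rho,\rho\circ\phi)$. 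This is the one point where irreducibility and the (non)equivalence of $\rho$ and $\rho\circ\phi$ must be used beyond Schur's lemma; everything else is a formal consequence of Burnside's theorem and nondegeneracy of the trace form.
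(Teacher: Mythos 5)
Your proof is correct and follows essentially the same route as the paper's: matrix coefficients are realized as $g\mapsto \tr(a\,\rho(g))$ via the nondegenerate trace pairing, twisted invariance is reduced to $a$ being an intertwiner between $\rho$ and $\rho\circ\phi$, Schur's lemma then gives 1)--2), trace-form duality gives 3), and Remark \ref{rk:separcoefffdim} gives 4). The union-versus-span subtlety in $K_\rho$ that you flag at the end is real but is silently elided in the paper as well (it declares $K_\rho$ ``evidently'' equal to the intersection of kernels of all twisted invariant functionals, i.e.\ treats it as its linear span), so your caveat is a refinement of the same argument rather than a divergence from it.
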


\begin{proof}
Let us sketch a proof, the details can be found in
\cite{FelTro,polyc,ncrmkwb}.
Matrix coefficients of a finite dimensional representation $\r$
arise from functionals on $\End V_\r$ and
can be written as $g\mapsto \Trace (a \r(g))$ for some
matrix $a\in\End V_\r$. Since the equality
$$
0=\Trace(ab)-\Trace(a \r(h) b \r(\phi(h^{-1})))=
\Trace ((a- \r(\phi(h^{-1}))a \r(h)) b)
$$
for any $b$ and $h$ implies $\r(\phi(h))a= a \r(h)$,
the above matrix coefficient is twisted invariant if and only if
$a$ is an intertwining operator between $\r$ and $\phi\circ\r$.
This gives 1), and Schur's lemma gives 2).
Now, 3) follows immediately from 1) and 2), because $K_\r$ is evidently
the intersection of kernels of all twisted invariant functionals on
$\End V_\rho$.
Finally, 4) follows from Remark \ref{rk:separcoefffdim}.
\end{proof}

\section{Fixed points and Reidemeister numbers}\label{sec:fixpoiReid}
\begin{teo}[an extraction from \cite{Jabara}]\label{teo:FixJabara}
Let $\G$ be a finitely generated residually finite group.
If $C_\G(\phi)=\Fix(\phi)$ is infinite, then $R(\phi)=\infty$.
\end{teo}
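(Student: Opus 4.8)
The plan is to prove the contrapositive: assuming $R(\phi)=r<\infty$, I will show that $\Fix(\phi)=C_\G(\phi)$ contains at most $r^{r-1}$ elements, hence is finite. The whole idea is to transport the problem into finite quotients, where the elementary counting bound of Lemma \ref{lem:jabarafinoz} is available, and crucially to make that bound \emph{uniform} across all such quotients.

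First I would record that the finite-quotient bound is uniform in $r$. The function $t\mapsto t^{t-1}$ is nondecreasing for integers $t\ge 1$, so whenever a finite quotient $\G/H$ carries an induced automorphism $\ov\phi$ with $R_{\G/H}(\ov\phi)\le r$, Lemma \ref{lem:jabarafinoz} gives $|C_{\G/H}(\ov\phi)|\le R_{\G/H}(\ov\phi)^{R_{\G/H}(\ov\phi)-1}\le r^{r-1}$, a bound depending only on $r$.

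Suppose for contradiction that $\Fix(\phi)$ has more than $r^{r-1}$ elements, and choose distinct fixed points $g_1,\dots,g_N$ with $N=r^{r-1}+1$. Since $\G$ is finitely generated and residually finite, Remark \ref{rk:fgrfmainprop} supplies a characteristic subgroup $H\ss\G$ of finite index on which the quotient map $q:\G\to\G/H$ is injective over $\{g_1,\dots,g_N\}$ (apply Definition \ref{dfn:resfin} to the finite set of the $g_i^{-1}g_j$, $i\ne j$, then upgrade $H$ to a characteristic one). Because $H$ is characteristic it is $\phi$-invariant, so $\phi$ descends to an automorphism $\ov\phi$ of the finite group $\G/H$, and $q$ carries fixed points to fixed points: from $\phi(g_i)=g_i$ one gets $\ov\phi(q(g_i))=q(\phi(g_i))=q(g_i)$.

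Now the two estimates collide. Applying the epimorphism inequality (\ref{eq:epiRade}) to the $\phi$-invariant normal subgroup $H$ yields $R_{\G/H}(\ov\phi)\le R_\G(\phi)=r$, whence $|C_{\G/H}(\ov\phi)|\le r^{r-1}$ by the first step. On the other hand the $N=r^{r-1}+1$ images $q(g_1),\dots,q(g_N)$ are pairwise distinct and all lie in $\Fix(\ov\phi)=C_{\G/H}(\ov\phi)$, forcing $|C_{\G/H}(\ov\phi)|\ge r^{r-1}+1$, a contradiction. Thus $|\Fix(\phi)|\le r^{r-1}<\infty$. The only delicate points are that the counting bound must be made uniform in the quotient, handled by the monotonicity of $t\mapsto t^{t-1}$, and that the separating finite-index subgroup can simultaneously be chosen characteristic, hence $\phi$-invariant, which is precisely what finite generation buys through Lemma \ref{lem:normalcharfiniteind}; I expect the latter to be the step most easily overlooked.
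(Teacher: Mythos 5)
Your proof is correct and follows essentially the same route as the paper: both arguments transport the question to finite quotients via characteristic finite-index subgroups separating fixed points (Remark \ref{rk:fgrfmainprop}), combine the counting bound of Lemma \ref{lem:jabarafinoz} with the epimorphism inequality (\ref{eq:epiRade}), and conclude. The only cosmetic difference is that you argue by contrapositive with a single quotient and the monotonicity of $t\mapsto t^{t-1}$, whereas the paper argues directly, inverting the bound into $\sqrt{\log_2|C_G(\psi)|}\le R_G(\psi)$ and letting the number of separated fixed points grow.
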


\begin{proof}
By Lemma \ref{lem:jabarafinoz} and inequality (\ref{eq:sravnnnilog})
for any automorphism $\psi$ of a finite
group $G$
\begin{equation}\label{eq:FixFinJabara}
    \sqrt{\log_2 |C_G(\psi)|}\le R_G(\psi).
\end{equation}
Let $\{x_1,x_2,\dots\}=C_\G (\phi)$. Then for every $n$ we can
find (see Remark \ref{rk:fgrfmainprop}) a characteristic subgroup
$\G_n$ of finite index in $\G$ such that the quotient map
$p_n:\G\to\G/\G_n=:G_n$ is injective on $\{x_1,\dots, x_n\}$. Let
$\phi_n:G_n\to G_n$ be the induced automorphism. Then $\{p_n
(x_1),\dots, p_n (x_n)\}\subset C_{G_n}(\phi_n)$, hence
(\ref{eq:epiRade}) and (\ref{eq:FixFinJabara}) imply
$$
R_\G(\phi)\ge R_{G_n}(\phi_n)\ge \sqrt {\log_2 |C_{G_n}(\phi_n)|}
\ge \sqrt {\log_2 n}.
$$
Since $n$ was arbitrary, we are done.
\end{proof}

\begin{cor}
Suppose, $\G$ is a torsion-free finitely generated residually finite group.
If $C_\G(\phi)>1$, then $R(\phi)=\infty$.
\end{cor}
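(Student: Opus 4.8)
The plan is to reduce this immediately to Theorem \ref{teo:FixJabara}. The key observation is that $C_\G(\phi)=\Fix(\phi)=\{g\in\G \: | \: \phi(g)=g\}$ is a \emph{subgroup} of $\G$, not merely a subset. Indeed, the identity is fixed, and if $\phi(g)=g$ and $\phi(h)=h$ then $\phi(gh^{-1})=\phi(g)\phi(h)^{-1}=gh^{-1}$, so $\Fix(\phi)$ is closed under the group operations. The hypothesis $C_\G(\phi)>1$ says precisely that this subgroup is nontrivial, i.e. it contains some element $x\ne e$.

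First I would invoke the torsion-freeness of $\G$ to upgrade ``nontrivial'' to ``infinite.'' Since $\G$ is torsion-free, the nontrivial element $x\in C_\G(\phi)$ has infinite order, so the cyclic subgroup $\langle x\rangle=\{x^k \: | \: k\in\Z\}$ is infinite. As $C_\G(\phi)$ is a subgroup containing $x$, it contains all powers $x^k$, and therefore $C_\G(\phi)$ is itself infinite. At this point the hypotheses of Theorem \ref{teo:FixJabara} are met: $\G$ is finitely generated and residually finite (these are inherited verbatim from the corollary's hypotheses), and $C_\G(\phi)=\Fix(\phi)$ is infinite. Applying that theorem yields $R(\phi)=\infty$, which is the desired conclusion.

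There is essentially no obstacle here; the corollary is a one-line specialization of Theorem \ref{teo:FixJabara}, and the only content beyond that theorem is the elementary fact that a torsion-free group has no nontrivial finite subgroups (in fact no nontrivial elements of finite order). The torsion-free hypothesis is exactly what bridges the gap between the weak assumption $C_\G(\phi)>1$ available in the corollary and the stronger assumption (infinitude of the fixed subgroup) required by the theorem. I would therefore present the argument in a few sentences, emphasizing only that $\Fix(\phi)$ is a subgroup and that in the torsion-free setting a single nonidentity fixed point already forces infinitude.
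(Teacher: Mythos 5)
Your proof is correct and is exactly the paper's argument: the paper's own proof reads ``in this case $\G$ has an infinite subgroup $\cong\Z$ formed by fixed points,'' i.e.\ a nontrivial fixed element has infinite order by torsion-freeness, so $\Fix(\phi)$ is infinite and Theorem \ref{teo:FixJabara} applies. You have merely spelled out the same one-line specialization in more detail.
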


\begin{proof}
Indeed, in this case $\G$ has an infinite subgroup$\cong\Z$ formed
by fixed points.
\end{proof}

\begin{teo}\label{teo:twistJabara}
Let $\G$ be as in Theorem \ref{teo:FixJabara} and $R(\phi)<\infty$.
Then $| St^{tw}_{\phi}(g, s g \phi(s^{-1})) | <\infty $ for any
$g,s \in \G$.
\end{teo}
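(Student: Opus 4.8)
The plan is to reduce the finiteness of a general twisted stabilizer to the finiteness of a twisted centralizer, and then to invoke the contrapositive of Theorem \ref{teo:FixJabara}.

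First I would use the coset computation (\ref{eq:stabilcoset}), which gives $|St^{tw}_{\phi}(g, s g \phi(s^{-1}))| = |St^{tw}_{\phi}(g)|$ for all $g,s\in\G$. Thus it suffices to prove that the twisted stabilizer $St^{tw}_{\phi}(g)$ is finite for each fixed $g\in\G$, and the parameter $s$ plays no further role.

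The key step is to identify this stabilizer with an ordinary fixed-point subgroup of a suitably twisted automorphism. An element $k$ lies in $St^{tw}_{\phi}(g)$ if and only if $k\,g\,\phi(k^{-1})=g$, which rearranges to $\phi(k)=g^{-1}\,k\,g$, i.e. $(\t_g\circ\phi)(k)=g\,\phi(k)\,g^{-1}=k$. Conversely, $(\t_g\circ\phi)(k)=k$ yields $\phi(k^{-1})=g^{-1}k^{-1}g$ and hence $k\,g\,\phi(k^{-1})=g$. Therefore
$$
St^{tw}_{\phi}(g)=\Fix(\t_g\circ\phi)=C_\G(\t_g\circ\phi).
$$
Now, by Corollary \ref{cor:rphiequaltaurphi} we have $R(\t_g\circ\phi)=R(\phi)<\infty$. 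Applying Theorem \ref{teo:FixJabara} to the automorphism $\psi:=\t_g\circ\phi$ in contrapositive form (were $C_\G(\psi)$ infinite, $R(\psi)$ would be infinite), we conclude that $C_\G(\t_g\circ\phi)$ is finite, whence $St^{tw}_{\phi}(g)$ is finite. Combined with the first step, this proves the claim.

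The only genuine content is the algebraic identification of the twisted stabilizer at $g$ with the fixed subgroup of the inner-twist $\t_g\circ\phi$; I do not expect any real obstacle beyond getting this rearrangement right, since everything else is an immediate appeal to the coset formula (\ref{eq:stabilcoset}), the inner-twist invariance of the Reidemeister number (Corollary \ref{cor:rphiequaltaurphi}), and the already-established Theorem \ref{teo:FixJabara}.
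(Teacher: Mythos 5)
Your proposal is correct and follows essentially the same route as the paper: reduce to $St^{tw}_{\phi}(g)$ via (\ref{eq:stabilcoset}), identify it with the fixed subgroup of an inner twist of $\phi$, and conclude by Corollary \ref{cor:rphiequaltaurphi} together with the contrapositive of Theorem \ref{teo:FixJabara}. The only cosmetic difference is that you perform the identification $St^{tw}_{\phi}(g)=C_\G(\t_g\circ\phi)$ by direct rearrangement, whereas the paper invokes the translation Lemma \ref{lem:redklassed} (writing the twist as $\t_{g^{-1}}\circ\phi$); both amount to the same computation, and the choice of $\t_g$ versus $\t_{g^{-1}}$ is immaterial since the Reidemeister number is invariant under any inner twist.
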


\begin{proof}
By (\ref{eq:stabilcoset}) we need to estimate $ | St^{tw}_{\phi}(g)|$
only. Under the left translation by $g^{-1}$ the Reidemeister class
$\{g\}_\phi$ will be moved to the class $\{e\}_{\t_{g^-1}\circ\phi}$
(see Lemma \ref{lem:redklassed}). Thus
$$
|St^{tw}_{\phi}(g)|=|St^{tw}_{\t_{g^-1}\circ\phi}(e)|=
|C_\G(\t_{g^-1}\circ\phi)|.
$$
If it is infinite, by Theorem \ref{teo:FixJabara}
and Corollary \ref{cor:rphiequaltaurphi}
we have
$\infty = R(\t_{g^-1}\circ\phi)=R(\phi)$. A contradiction.
\end{proof}

\begin{cor}\label{cor:infiniteclasses}
Let $\G$ and $\phi$ be as in Theorem \ref{teo:twistJabara}
and $|\G|=\infty$. Then all Reidemeister classes of $\phi$
are infinite.
\end{cor}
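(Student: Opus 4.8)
The plan is to realise each Reidemeister class as a single orbit of a genuine group action on $\G$, and then to read off its cardinality from the size of the corresponding point stabiliser, which has already been controlled in Theorem \ref{teo:twistJabara}.

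First I would record that the twisted action $g\mapsto x\,g\,\phi(x^{-1})$ is an honest left action of $\G$ on itself: writing $\sigma_x(g):=x\,g\,\phi(x^{-1})$ one checks $\sigma_y\circ\sigma_x=\sigma_{yx}$, using $\phi(x^{-1})\phi(y^{-1})=\phi((yx)^{-1})$. By definition the orbit of $g$ under $\sigma$ is exactly the Reidemeister class $\{g\}_\phi$, and the point stabiliser of $g$ is $St^{tw}_{\phi}(g)$. The coset computation already performed in the derivation of (\ref{eq:stabilcoset}) shows that the orbit map $x\mapsto x\,g\,\phi(x^{-1})$ induces a bijection between the coset space $\G/St^{tw}_{\phi}(g)$ and $\{g\}_\phi$, so that
\begin{equation*}
\#\{g\}_\phi=[\G:St^{tw}_{\phi}(g)].
\end{equation*}

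To finish I would invoke the hypothesis $R(\phi)<\infty$: Theorem \ref{teo:twistJabara}, specialised to $s=e$, gives $|St^{tw}_{\phi}(g)|<\infty$ for every $g\in\G$. Since $|\G|=\infty$ while this stabiliser is finite, the index $[\G:St^{tw}_{\phi}(g)]$ is necessarily infinite, and hence $\#\{g\}_\phi=\infty$ for every $g$, as claimed. I do not anticipate a real obstacle: the entire weight of the argument sits in Theorem \ref{teo:twistJabara}, and the only point deserving a line of care is the verification that $\sigma$ is a left action, so that the orbit--stabiliser bijection applies verbatim to the infinite group $\G$.
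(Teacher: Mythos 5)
Your proposal is correct and is essentially identical to the paper's argument: the paper's one-line proof, $|\G|=|\{g\}_\phi|\cdot|St^{tw}_{\phi}(g)|$, is precisely the orbit--stabiliser relation for the twisted action that you spell out, combined with the finiteness of stabilisers from Theorem \ref{teo:twistJabara}. You have merely made explicit the verification that the twisted conjugation is a left action and the coset bijection, which the paper leaves implicit via (\ref{eq:stabilcoset}).
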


\begin{proof}
Indeed, $|\G|=|\{g\}_\phi|\cdot | St^{tw}_{\phi}(g)|$.
\end{proof}

\section{Twisted Burnside-Frobenius theorem}\label{sec:TBFT}
\begin{teo}\label{teo:TBFTforRF}
Let $R(\phi)<\infty$ for an automorphism $\phi$ of a
finitely generated
residually finite group $G$.
Then $R(\phi)$ is equal to the number of finite dimensional 
fixed points of $\wh\phi$.
\end{teo}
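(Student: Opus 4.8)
The plan is to prove a two-sided inequality between $R(\phi)$ and the number $N$ of finite-dimensional $\wh\phi$-fixed points.

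First I would establish $N \le R(\phi)$. By part (1) of Lemma \ref{lem:invarfunccodimetc}, each $\wh\phi$-fixed class $[\rho]$ in $\wh G_f$ gives rise to a twisted-invariant matrix coefficient $\w_\rho$, unique up to scaling by part (2); and by part (4) the functions arising from distinct fixed classes are linearly independent. Since every such $\w_\rho$ is a $\phi$-class function, it factors through the finite set of $R(\phi)$ Reidemeister classes, i.e. it lies in the space of $\phi$-class functions, which has dimension at most $R(\phi)$ (it is spanned by the indicator functions of the Reidemeister classes). Linear independence of the $\w_\rho$ then forces $N \le R(\phi)$. In particular $N$ is finite.

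The harder direction is $R(\phi) \le N$, and this is where residual finiteness must enter. The strategy I would use is to approximate $G$ by finite quotients and invoke the classical Burnside--Frobenius theorem (the finite case recalled in the Introduction) on each. Concretely, since $R(\phi)<\infty$, pick representatives $g_1=e,g_2,\dots,g_r$ of the Reidemeister classes. By Corollary \ref{cor:infiniteclasses} each class is infinite; the key point to exploit is that for a finite quotient $G_n=G/\G_n$ by a characteristic (hence $\phi$-invariant) subgroup $\G_n$ of finite index, the induced automorphism $\phi_n$ satisfies $R_{G_n}(\phi_n)\le R_G(\phi)=r$ by \eqref{eq:epiRade}; moreover, for $n$ large enough the quotient map separates the classes $\{g_i\}_\phi$, so that $R_{G_n}(\phi_n)=r$ exactly. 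For such a quotient, the finite Burnside--Frobenius theorem gives that $r$ equals the number of $\wh{\phi_n}$-fixed points on $\wh{G_n}$, which are finite-dimensional irreducible representations of $G$ (inflated from $G_n$) fixed by $\wh\phi$. I would then lift these to distinct $\wh\phi$-fixed classes in $\wh G_f$, obtaining $r$ of them, hence $R(\phi)\le N$.

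The main obstacle is the lifting/stabilization argument: one must ensure that the $r$ fixed points produced in the finite quotients genuinely yield $r$ \emph{distinct} fixed points in $\wh G_f$, rather than collapsing or failing to assemble coherently across the tower of quotients $\{G_n\}$. The natural mechanism is to use the twisted-invariant matrix coefficients: the finite-dimensional Burnside--Frobenius correspondence attaches to each $\wh{\phi_n}$-fixed representation a twisted-invariant function on $G_n$, which pulls back to a twisted-invariant function on $G$; these span the full $r$-dimensional space of $\phi$-class functions once $n$ is large. Matching this with the upper-bound direction, where the $\w_\rho$ for genuine fixed points of $\wh\phi$ span a subspace of the same space, I expect to conclude that the fixed-point coefficients exhaust all $\phi$-class functions, forcing equality $N=r$. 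The delicate step is controlling the passage to the limit — showing the finitely many coefficients obtained for large $n$ really are coefficients of $G$-representations (not merely of quotients) and that weak-containment/linear-independence arguments (Remark \ref{rk:separcoefffdim}) propagate correctly.
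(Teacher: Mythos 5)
Your first inequality ($N \le R(\phi)$, where $N$ is the number of $\wh\phi$-fixed points in $\wh G_f$) is correct and is exactly the paper's ``inverse estimation'': twisted-invariant matrix coefficients attached to fixed classes exist by part 1) of Lemma \ref{lem:invarfunccodimetc}, coefficients attached to distinct fixed classes are linearly independent by part 4), and they all lie in the space of $\phi$-class functions, whose dimension is $R(\phi)$. The genuine gap is in the hard direction. The step ``for $n$ large enough the quotient map separates the classes $\{g_i\}_\phi$, so that $R_{G_n}(\phi_n)=r$ exactly'' is not a consequence of residual finiteness. Residual finiteness produces a finite quotient in which finitely many prescribed \emph{elements} remain distinct; it says nothing about the images of $g_1,\dots,g_r$ remaining in distinct \emph{twisted conjugacy classes} of $G_n$ --- two elements with distinct images may become $\phi_n$-conjugate in $G_n$, conceivably in every finite quotient. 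What you are invoking is precisely $\phi$-conjugacy separability of $G$, which in the paper is Corollary \ref{cor:phiconjRF} and is deduced \emph{from} Theorem \ref{teo:TBFTforRF}; assuming it here makes the argument circular. The remark following the theorem in the paper warns against exactly this trap: finitely generated residually finite groups form a strictly larger class than conjugacy separable groups, so ``separate elements, hence separate classes'' cannot be the mechanism. Your closing paragraph does not repair this: the twisted-invariant functions pulled back from $G_n$ span a subspace of dimension exactly $R_{G_n}(\phi_n)$ inside the $r$-dimensional space of $\phi$-class functions, so asserting that they ``span the full $r$-dimensional space once $n$ is large'' is merely a restatement of the unproved claim $R_{G_n}(\phi_n)=r$.

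The paper's proof avoids separating classes altogether, and this is the missing idea. It realizes $R(\phi)$ as the codimension in $\ell^1(G)$ of the closed subspace $K_1$ spanned by the elements $b-g[b]$, where $g[b](x)=b(gx\phi(g^{-1}))$; since this codimension is finite, one can choose a complement of $K_1$ with a basis $a_1,\dots,a_r$ of \emph{finitely supported} functions. Residual finiteness is then used only for what it honestly provides: a characteristic (hence $\phi$-invariant) finite-index subgroup $H$ such that $p:G\to F=G/H$ is injective on the finite set $\bigcup_i\supp a_i$ (Remark \ref{rk:fgrfmainprop}). Because $H$ is $\phi$-invariant, $p$ intertwines $\phi$ with the induced automorphism of $F$, so $K_1$ maps onto the corresponding subspace $K_p\subset\C[F]$ and the images $p_1(a_i)$ form a basis of a complement of $K_p$; the block decomposition $\C[F]\cong\oplus_i\End V_i$ together with part 3) of Lemma \ref{lem:invarfunccodimetc} then identifies $\mathrm{codim}\,K_p$ with the number of fixed classes of the dual of the induced automorphism on $\wh F$, and these inflate injectively to $\wh\phi$-fixed classes in $\wh G_f$. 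So the quantitative bridge between $G$ and its finite quotient is carried by the linear algebra of $K_1$ plus a \emph{support}-separation condition, not by a \emph{class}-separation condition; inequality (\ref{eq:epiRade}) and the finite Burnside-Frobenius theorem, which drive your outline, by themselves can only give bounds in terms of $R_{G_n}(\phi_n)$, which you cannot a priori pin down.
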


\begin{proof}
$R(\phi)$ equals the dimension of the space of twisted invariant
elements of $\ell^\infty(G)$, i.e. functionals on $\ell^1(G)$ such
that their kernels contain the closure $K_1$ in $\ell^1(G)$ of the
space of elements of the form $b-g[b]$, $g[b](x):=b(g x
\phi(g^{-1})$.

Since $R(\phi)<\infty$, $\mathrm{codim}\, K_1 = R(\phi)$, and
$K_1$ has a Banach space complement of dimension $R(\phi)$. We can
take it in a way such that it has a base $a_i \in \C[G]$,
$i=1,\dots,R(\phi)$, i.e., all $a_i$'s have a finite support. Let
$p:G \to F=G/H$ be an epimorphism on a finite group $F$ such that
it distinguishes all elements from the union of (finite) supports
of $a_i$ and $H$ is characteristic (see Remark
\ref{rk:fgrfmainprop}). The image of $\ell^1(G)$ under the induced
homomorphism $p_1$ is $\ell^1(F)=\C[F]$. Also $K_1$ maps
epimorphically onto the space $K_p$ of elements
$\b-p(g)[\b]=p_1(b)-p(g)[p_1(b)]=p_1(b-g[b])$  in $\C[F]$. Thus,
$\{p_1(a_i)\}$ form a basis of a complement to $K_p$ in $\C[F]$.

Decompose this (finite dimensional)
algebra $\C[F]$ into a direct sum
of matrix algebras, i.e., decompose the image
 $p_1(\ell^1(G))$ (in fact, $p_1(C^*(G))$)
 into blocks:
$$
J:\C[F]\cong\oplus_{i=1}^N \End V_i,
$$
where $\r_i$ ($i=1,\dots,N$) are some irreducible representations
on $V_i$.
Since this decomposition is in fact associated with
the decomposition of the left regular
representation of
$$
\l_F\cong\oplus_{i=1}^N V_i\otimes V_i^*,
$$
these $\r_i$'s are
pairwise non-equivalent \cite[I, \S 2]{serrerepr}.
Let $K_i$ be formed by $x-\r_i(g)[x]$ in $\End V_i$.
Since $J$ is an algebra isomorphism,
$$
R(\phi)=\mathrm{codim}\, K_1= \sum_i \mathrm{codim}\, K_i.
$$
The last one is 1 if $\wh \phi (\r_i)= \r_i$ (in this
case  intertwining operators give a linear complement to $K_i$)
and 0 otherwise by Lemma \ref{lem:invarfunccodimetc}.
Thus, $R(\phi)\le$ the number of finite dimensional 
fixed points of $\wh\phi$.

The inverse estimation follows from Lemma \ref{lem:invarfunccodimetc}.
\end{proof}

\begin{rk}
We use here an approach from
\cite{ncrmkwb}) with $\ell^1(G)$ instead of $C^*(G)$.
\end{rk}

\begin{rk}
As it is known (see e.g. \cite{Remes,Wehrfritz73JLMS})
f.g. residually finite groups
($e$ is closed in the profinite topology) form a strictly
larger class then conjugacy separable groups (each conjugacy
class is closed). Our argument is not applicable to
a ``proof'' of a wrong fact (coincidence of these classes).
Indeed, we have no map to identify invariant elements
from $\ell^\infty(G)$ with a complement to $K_1$. So,
our argument uses in a crucial way the finiteness of $R(\phi)$,
while the number of usual conjugacy classes is typically
infinite. We have an infinite number of independent
characters of finite dimensional irreducible representations
for a residually finite group and can separate infinitely
many ordinary conjugacy classes, but not necessary all of them.
\end{rk}

\begin{cor}
In the situation of Theorem \ref{teo:TBFTforRF}
$\phi$-class functions are in $B(G)$, Fourier-Stieltijes
algebra of $G$ $($c.f. the discussion in {\rm \cite{ncrmkwb})}.
\end{cor}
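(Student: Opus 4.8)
The plan is to exhibit an explicit basis of the space of $\phi$-class functions consisting of functions that visibly lie in $B(G)$. Since $R(\phi)<\infty$, a $\phi$-class function is constant on each of the finitely many Reidemeister classes and hence takes only finitely many values; the indicator functions of the $R(\phi)$ classes are independent and span, so the space $\mathcal{F}_\phi$ of all $\phi$-class functions has dimension exactly $R(\phi)$ (equivalently, these functions are automatically bounded and coincide with the twisted invariant elements of $\ell^\infty(G)$ used at the start of the proof of Theorem \ref{teo:TBFTforRF}).

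Next I would produce $R(\phi)$ independent elements of $\mathcal{F}_\phi$ lying in $B(G)$. By Theorem \ref{teo:TBFTforRF} there are precisely $R(\phi)$ finite-dimensional $\wh\phi$-fixed classes $[\rho_1],\dots,[\rho_{R(\phi)}]$ in $\wh G_f$. By Lemma \ref{lem:invarfunccodimetc}(1)--(2) each $\rho_i$ carries a twisted invariant matrix coefficient $\omega_i$, unique up to scaling, of the form $g\mapsto \Trace(a_i\,\rho_i(g))$; these $\omega_i$ are $\phi$-class functions, and by Lemma \ref{lem:invarfunccodimetc}(4) they are linearly independent. As $\dim \mathcal{F}_\phi=R(\phi)$ and we have $R(\phi)$ independent elements of $\mathcal{F}_\phi$, the family $\{\omega_i\}$ is a basis of $\mathcal{F}_\phi$.

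Finally I would observe that each $\omega_i$ lies in $B(G)$. Indeed $\rho_i\in\wh G_f$ is a finite-dimensional \emph{unitary} irreducible representation, so $\Trace(a_i\,\rho_i(g))=\sum_{j,k}(a_i)_{kj}\,\<\rho_i(g)e_j,e_k\>$ is a finite linear combination of genuine matrix coefficients of a unitary representation, and hence belongs to $B(G)$ by the second description of the Fourier--Stieltjes algebra. Since $B(G)$ is a linear space and $\{\omega_i\}$ spans $\mathcal{F}_\phi$, every $\phi$-class function is a linear combination of the $\omega_i$ and therefore lies in $B(G)$.

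The only genuinely nontrivial input is the equality $\dim\mathcal{F}_\phi=\#\{\wh\phi\text{-fixed points in }\wh G_f\}$, which is exactly the content of Theorem \ref{teo:TBFTforRF}: it is what guarantees that the independent family $\{\omega_i\}$ is large enough to \emph{span}, and not merely to sit inside, the space of class functions. Everything else --- the dimension count via indicator functions, the construction and independence of the $\omega_i$ via Lemma \ref{lem:invarfunccodimetc}, and the membership in $B(G)$ of matrix coefficients of finite-dimensional unitary representations --- is routine once that theorem is in hand, so I anticipate no real obstacle beyond invoking Theorem \ref{teo:TBFTforRF} correctly.
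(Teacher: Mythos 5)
Your proposal is correct and matches the argument the paper intends: the corollary is stated as an immediate consequence of Theorem \ref{teo:TBFTforRF}, whose proof already identifies the space of $\phi$-class functions (dimension $R(\phi)$) with the span of the twisted invariant matrix coefficients supplied by Lemma \ref{lem:invarfunccodimetc}, and those coefficients lie in $B(G)$ by the matrix-coefficient description of the Fourier--Stieltjes algebra. You have simply written out the dimension count and the spanning step that the paper leaves implicit, so there is nothing to add.
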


A group $G$ is $\phi$-conjugacy separable, if there is a
homomorphism onto finite group $G/H$ with a $\phi$-invariant
normal subgroup $H$ inducing a bijection of twisted conjugacy
classes.

\begin{cor}\label{cor:phiconjRF}
In the situation of Theorem \ref{teo:TBFTforRF}
$G$ is $\phi$-conjugacy separable.
\end{cor}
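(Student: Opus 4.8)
The plan is to reuse the finite quotient already constructed in the proof of Theorem \ref{teo:TBFTforRF}. There one produces an epimorphism $p:G\to F=G/H$ onto a finite group with $H$ characteristic of finite index; being characteristic, $H$ is in particular $\phi$-invariant and normal, so the induced automorphism $\ov\phi:F\to F$ is defined. By (\ref{eq:epiRade}) the quotient map sends $\{g\}_\phi$ to $\{p(g)\}_{\ov\phi}$ and thus induces a surjection of the set of $\phi$-Reidemeister classes of $G$ (finite, of cardinality $R(\phi)$ by hypothesis) onto the set of $\ov\phi$-Reidemeister classes of $F$. So $H$ is the natural candidate for the subgroup witnessing $\phi$-conjugacy separability, and it remains only to upgrade this surjection to a bijection.

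First I would identify $R_F(\ov\phi)$ with a codimension. Since $F$ is finite, $R_F(\ov\phi)$ equals the dimension of the space of $\ov\phi$-class functions on $F$, i.e. of the functionals on $\C[F]$ that are twisted invariant; these are exactly the functionals annihilating $K_p$, whence $R_F(\ov\phi)=\mathrm{codim}\, K_p$. Next I would invoke the computation carried out inside the proof of Theorem \ref{teo:TBFTforRF}: there $\{p_1(a_i)\}_{i=1}^{R(\phi)}$ was shown to be a basis of a complement to $K_p$ in $\C[F]$, so that $\mathrm{codim}\, K_p = R(\phi) = R_G(\phi)$. Combining the two displays gives $R_F(\ov\phi)=R_G(\phi)$.

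I would then conclude by a counting argument: the quotient map gives a surjection between the finite sets of Reidemeister classes of $G$ and of $F$, and by the previous step these sets have the same cardinality $R_G(\phi)=R_F(\ov\phi)$; a surjection between equinumerous finite sets is automatically injective, hence a bijection. Therefore $p:G\to G/H$ induces a bijection of twisted conjugacy classes, and by the definition preceding the statement $G$ is $\phi$-conjugacy separable.

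The entire content sits in the equality $R_F(\ov\phi)=R_G(\phi)$, and the only place it could conceivably fail is the assertion that the images $p_1(a_i)$ stay linearly independent modulo $K_p$, so that the codimension does not drop when passing from $G$ to $F$. This is precisely the delicate point already handled in Theorem \ref{teo:TBFTforRF}, by choosing $p$ so as to separate the finite supports of the $a_i$ and by using that the block decomposition $J$ is an algebra isomorphism; accordingly I expect no new obstacle here beyond re-reading that argument.
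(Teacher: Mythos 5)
Your proposal is correct and is essentially the argument the paper intends: the corollary is read off from the proof of Theorem \ref{teo:TBFTforRF} exactly as you do, by noting that $R_F(\ov\phi)=\mathrm{codim}\,K_p=R_G(\phi)$ (the characteristic subgroup $H$ being in particular $\phi$-invariant) and that a surjection between finite sets of equal cardinality is a bijection. Your closing remark is also apt: the whole weight rests on the linear independence of the $p_1(a_i)$ modulo $K_p$, which is precisely the step carried by the proof of the theorem itself, not by the corollary.
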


One can slightly modify the proof of Theorem \ref{teo:TBFTforRF}
to obtain the following statement.

\begin{prop}
Suppose, $R(\phi)=\infty$  for an automorphism $\phi$ of a
finitely generated residually finite group $G$. Then the number of
finite dimensional fixed points of $\wh\phi$ is infinite.
\end{prop}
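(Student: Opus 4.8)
The plan is to run the argument of Theorem~\ref{teo:TBFTforRF} on finite-dimensional pieces of the (now infinite-dimensional) quotient $\ell^1(G)/K_1$ and let the dimension of those pieces tend to infinity. First I would observe that, since $R(\phi)=\infty$, the space of twisted invariant elements of $\ell^\infty(G)$ is infinite-dimensional, so $\mathrm{codim}\,K_1=\infty$. As $\C[G]$ is dense in $\ell^1(G)$, its image in $\ell^1(G)/K_1$ is a dense, hence infinite-dimensional, subspace; therefore for every $N\in\N$ one can choose $a_1,\dots,a_N\in\C[G]$ with finite support whose classes in $\ell^1(G)/K_1$ are linearly independent.

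Next I would fix $N$ and mimic the projection step. Choose (Remark~\ref{rk:fgrfmainprop}) a characteristic subgroup $H$ of finite index such that the quotient $p:G\to F=G/H$ is injective on the union of the supports of the $a_i$ and, moreover, is fine enough that the images $p_1(a_1),\dots,p_1(a_N)$ remain linearly independent modulo $K_p$ in $\C[F]$. Then $\mathrm{codim}_{\C[F]}K_p\ge N$, i.e. $R_F(\bar\phi)\ge N$ for the induced automorphism $\bar\phi$ of $F$. Decomposing $\C[F]\cong\bigoplus_{i=1}^M\End V_i$ into matrix blocks as in Theorem~\ref{teo:TBFTforRF}, each block contributes $1$ to $\mathrm{codim}\,K_p$ exactly when $\r_i$ is $\wh{\bar\phi}$-fixed and $0$ otherwise (Lemma~\ref{lem:invarfunccodimetc}); hence $F$ has at least $N$ pairwise non-equivalent $\wh{\bar\phi}$-fixed irreducible representations. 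Since $H$ is characteristic one has $p\circ\phi=\bar\phi\circ p$, so each such $\r_i$ pulls back to a finite-dimensional $\wh\phi$-fixed representation $\r_i\circ p$ of $G$, and distinct $\r_i$ give non-equivalent representations because $p$ is surjective. Thus $\wh G_f$ carries at least $N$ fixed points of $\wh\phi$, and as $N$ was arbitrary their number is infinite.

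The only delicate point is the requirement, highlighted above, that the finite quotient $F$ be chosen so that the projected elements $p_1(a_i)$ stay independent modulo $K_p$; equivalently, that $F$ separates the finitely many twisted conjugacy classes relevant to the supports of the $a_i$. This is exactly the separation already used (for the full complement) in the proof of Theorem~\ref{teo:TBFTforRF}, and it is what makes residual finiteness essential: because the $a_i$ have finite support, one shrinks the cosets of $H$ around this finite set until the class-sums that witness $\sum_i c_i a_i\notin K_1$ are no longer annihilated after projection. I expect verifying this independence to be the main obstacle, since without it one would only have the trivial bound $R_F(\bar\phi)\le R(\phi)$ with no lower bound; the finiteness of the supports together with Remark~\ref{rk:fgrfmainprop} is precisely what supplies the matching lower bound $R_F(\bar\phi)\ge N$ and lets the construction proceed for every $N$.
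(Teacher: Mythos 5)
Your proposal is correct and follows essentially the same route as the paper's own proof: both extract, for each $N$, a finitely supported $N$-dimensional piece of a complement to $K_1$ (the paper phrases this via a subspace $L\supseteq K_1$ of codimension $N$ with a finitely supported basis of a complement), push it to a finite characteristic quotient as in Theorem \ref{teo:TBFTforRF}, and read off at least $N$ fixed points of $\wh\phi$ on $\wh G_f$ from the matrix-block decomposition of $\C[F]$ together with Lemma \ref{lem:invarfunccodimetc}. The delicate point you flag --- that the images $p_1(a_i)$ stay independent modulo $K_p$ --- is treated by the paper exactly as you treat it, namely by appeal to the corresponding step in the proof of Theorem \ref{teo:TBFTforRF}, so your argument is faithful to the paper's in this respect as well.
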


\begin{proof}
Let $M$ be an arbitrary positive integer. Consider a subspace
$L\ss \ell^1(G)$ such that $K_1 \ss L$ and $\mathrm{codim} (L)=m$.
Choose a complement to $L$ with a base $\{a_1,\dots,a_m\}$ of
finitely supported functions and corresponding $F$ and $H$ as in
the proof of Theorem \ref{teo:TBFTforRF}. In the same way $K_1$
maps onto $K_p$ and the images of $a_i$ form a part of a basis of
a complement to $K_p$ in $\C[F]$. We obtain $m\le$ the number of
finite dimensional fixed points of $\wh\phi$. Since $m$ is
arbitrary, we obtain the statement.
\end{proof}

\section{Amenability, twisted inner representation,
and $R_\infty$ property}\label{sec:amentwistinnrinfty}

Our argument here partially follows \cite{KaniuMark1992}.
\begin{dfn}
Denote by $\g^\phi_G$ the
\emph{twisted inner representation} of $G$
on $\ell^2(G)$, i.e.
$$
\g^\phi_G(x)(f)(g)=f(xg\phi(x^{-1})),\quad x,g\in G,\quad
f\in\ell^2(G).
$$
\end{dfn}
Denote $C_\phi(a):=St^{tw}_\phi(a)$, $a\in G$.
Evidently, $\g^\phi_G$
decomposes into a direct sum of representations
 $\g^\phi_a$ being restrictions of $\g^\phi_G$
 onto $\{a\}_\phi$
(i.e. on $\ell^2(\{a\}_\phi)$).

\begin{lem}\label{lem:ogranichgammaphi}
The representation $\g^\phi_a$ is equivalent to the induced
representation $\ind ^G_{C_\phi(a)} 1_{C_\phi(a)}$.
\end{lem}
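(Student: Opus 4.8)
The plan is to recognize $\g^\phi_a$ as the permutation (quasi-regular) representation attached to the transitive twisted action of $G$ on the Reidemeister class $\{a\}_\phi$, and then to invoke the standard identification of such a representation with an induced representation of the trivial representation.

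First I would record that the twisted action $x\cdot g := x g \phi(x^{-1})$ is a genuine left action of $G$ on itself and that, by the very definition of the Reidemeister class, $\{a\}_\phi$ is exactly the orbit of $a$; in particular the action of $G$ on $\{a\}_\phi$ is transitive, and the stabilizer of the base point $a$ is $C_\phi(a)=St^{tw}_\phi(a)$. The orbit--stabilizer correspondence (already used implicitly in (\ref{eq:stabilcoset}) and Corollary \ref{cor:infiniteclasses}) then produces a map
\[
\b\colon G/C_\phi(a)\longrightarrow \{a\}_\phi,\qquad x\,C_\phi(a)\mapsto x\cdot a = x a\phi(x^{-1}),
\]
and I would check that $\b$ is well defined, injective, surjective, and $G$-equivariant (for left translation on the source and the twisted action on the target); well-definedness and injectivity are immediate from the equivalence $x\cdot a=y\cdot a \Leftrightarrow y^{-1}x\in C_\phi(a)$, surjectivity is the definition of $\{a\}_\phi$, and equivariance follows from $x\cdot(y\cdot a)=(xy)\cdot a$.

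Next I would transport $\b$ to Hilbert spaces: being a bijection of the underlying sets, it induces a unitary $U\colon \ell^2(\{a\}_\phi)\to \ell^2(G/C_\phi(a))$, $Uf=f\circ\b$, and a one-line computation using $(\g^\phi_a(x)f)(g)=f(x\cdot g)$ shows that $U$ intertwines $\g^\phi_a$ with the translation (permutation) representation of $G$ on $\ell^2(G/C_\phi(a))$. Finally I would appeal to the standard fact (see, e.g., \cite{BekkaHarpeValetteT}) that the permutation representation of $G$ on $\ell^2(G/H)$ is precisely a concrete model of $\ind^G_H 1_H$, applied with $H=C_\phi(a)$; this yields $\g^\phi_a\cong \ind^G_{C_\phi(a)} 1_{C_\phi(a)}$.

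The computations are routine; the only point demanding care is the bookkeeping of conventions. The defining formula $(\g^\phi_G(x)f)(g)=f(x g\phi(x^{-1}))$ reverses the order of multiplication in $x$, so in matching $\g^\phi_a$ with the left-regular model of $\ind^G_{C_\phi(a)} 1_{C_\phi(a)}$ one must track whether the intertwiner is built from $x$ or from $x^{-1}$; up to this harmless choice, intrinsic to fixing the realization of the quasi-regular representation, the identification above is exactly the asserted equivalence. I expect this convention-tracking, rather than any genuine analytic difficulty, to be the main thing to get right.
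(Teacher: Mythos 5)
Your proposal is correct and is essentially the paper's own proof: the paper likewise identifies the coset space of $C_\phi(a)$ with the orbit $\{a\}_\phi$ via the orbit--stabilizer map $g\mapsto ga\phi(g^{-1})$ and checks directly that the induced unitary $I(f)=f\circ i$ intertwines $\g^\phi_a$ with the translation model of $\ind^G_{C_\phi(a)}1_{C_\phi(a)}$. Your explicit flagging of the left/right convention in the defining formula for $\g^\phi_G$ is a point the paper itself glosses over somewhat loosely, but it changes nothing of substance.
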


\begin{proof}
Indeed, this induced representation $T$ can be realized
on $\ell^2(C_\phi(a)\setminus G)$
by the following action
$[T(g)(f)](x)=f(xg)$, $x\in C_\phi(a)\setminus G$, $g\in G$,
where $C_\phi(a)\setminus G$ is the space of left cosets
by $C_\phi(a)$.
Let us identify $C_\phi(a)\setminus G$ with $\{a\}_\phi$
by $i(C_\phi(a)\cdot g)=\g^\phi_G(g)(a)$. Evidently, this
map is well defined and gives a unitary isomorphism
$$
I: \ell^2(\{a\}_\phi) \to \ell^2(C_\phi(a)\setminus G),
\quad I(f)(x):=f(i(x)).
$$
Then
$$
[I \circ \g^\phi_G (g)(f)](x)=
[ \g^\phi_G (g)(f)](i(x))= f(g h a \phi((gh)^{-1})),
\quad x= C_\phi(a)\cdot h,
$$
$$
[T(g)\circ I (f)](x)=  I(f)(xg)= f(i (xg))=
f(\g^\phi_G(hg)(a))=f(g h a \phi((gh)^{-1})).
$$
Thus, $I$ is an intertwining unitary.
\end{proof}

\begin{teo}\label{teo:weakecontgammaphiregul}
Suppose,   $|C_\phi(a)|<\infty$,
for any $a\in G$. Then $\g^\phi_G$ is weakly
contained in the regular representation $\l_G$.
\end{teo}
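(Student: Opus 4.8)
The plan is to reduce the statement, via the direct-sum decomposition $\g^\phi_G\cong\bigoplus_a\g^\phi_a$ over Reidemeister classes recorded just before the theorem, to the claim that each summand $\g^\phi_a$ is weakly contained in $\l_G$, and then to reassemble the pieces. First I would treat a single class. By Lemma \ref{lem:ogranichgammaphi} we have $\g^\phi_a\cong\ind^G_{C_\phi(a)}1_{C_\phi(a)}$, so the hypothesis $|C_\phi(a)|<\infty$ means we are inducing the trivial representation from a \emph{finite} subgroup $H:=C_\phi(a)$. A finite group is amenable, so by the amenability characterization recalled in Section \ref{sec:prelim} one has $1_H\prec\l_H$; in fact $1_H$ is literally a subrepresentation of $\l_H$.

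Next I would invoke the two classical facts about induction (both available in \cite{BekkaHarpeValetteT}): induction is continuous with respect to weak containment, i.e. $\s_1\prec\s_2$ implies $\ind^G_H\s_1\prec\ind^G_H\s_2$; and induction in stages gives $\ind^G_H\l_H\cong\l_G$, since $\l_G=\ind^G_{\{e\}}1$ and $\l_H=\ind^H_{\{e\}}1$. Combining these with $1_H\prec\l_H$ yields
\[
\g^\phi_a\cong\ind^G_H 1_H\prec\ind^G_H\l_H\cong\l_G,
\]
so every individual summand is weakly contained in $\l_G$.

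It remains to pass from the individual summands to the whole representation $\g^\phi_G$, and this is where I expect the only genuine care to be needed: the relation (\ref{eq:weakcontsum}) is stated only for \emph{finite} direct sums, whereas the index set here (the set of Reidemeister classes) may be infinite. Using the matrix-coefficient description of weak containment from Section \ref{sec:prelim}, a diagonal coefficient of $\bigoplus_a\g^\phi_a$ attached to a unit vector $\xi=(\xi_a)$ is $g\mapsto\sum_a\langle\g^\phi_a(g)\xi_a,\xi_a\rangle$; since $\sum_a\|\xi_a\|^2<\infty$ its tail is uniformly small in $g$, so truncating to finitely many indices reduces the problem to the finite case (\ref{eq:weakcontsum}), and the truncated coefficient, being a diagonal coefficient of a finite subsum $\prec\l_G$, is approximable by linear combinations of diagonal coefficients of $\l_G$ uniformly on finite sets. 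Adding the two approximation errors gives $\g^\phi_G=\bigoplus_a\g^\phi_a\prec\l_G$, as required.
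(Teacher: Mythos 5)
Your proof is correct, and it follows the paper's skeleton --- decompose $\g^\phi_G$ into the $\g^\phi_a$, identify each $\g^\phi_a$ with $\ind^G_{C_\phi(a)}1_{C_\phi(a)}$ via Lemma \ref{lem:ogranichgammaphi}, show each summand is weakly contained in $\l_G$, and reassemble --- but the crucial middle step is done by a genuinely different mechanism. The paper proves $\ind^G_{C_\phi(a)}1_{C_\phi(a)}\prec\l_G$ directly: the characteristic function $\chi_{C_\phi(a)}$ is a positive definite function associated to $\l_G$, being a finite sum of translates of $\delta_e$ (equivalently, a diagonal matrix coefficient of $\l_G$ for the vector $|C_\phi(a)|^{-1/2}\chi_{C_\phi(a)}\in\ell^2(G)$), and the cyclic representation attached to $\chi_{C_\phi(a)}$ is exactly the induced representation, cf. \cite[E.4.4]{BekkaHarpeValetteT}. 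You instead factor the claim through three standard facts: $1_H\prec\l_H$ for $H=C_\phi(a)$ finite (indeed $1_H$ is a subrepresentation of $\l_H$), continuity of induction with respect to weak containment, and induction in stages $\ind^G_H\l_H\cong\l_G$. Both routes are legitimate and rest on \cite{BekkaHarpeValetteT}; the paper's is more economical (one explicit positive definite function, one citation), while yours is more modular and in fact proves something stronger: your argument uses only amenability of the stabilizers, so it yields $\g^\phi_G\prec\l_G$ whenever every $C_\phi(a)$ is amenable, not necessarily finite. Your last paragraph also addresses a point the paper leaves implicit: (\ref{eq:weakcontsum}) is stated for finite sums, whereas the set of Reidemeister classes may be infinite. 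Your tail-truncation of diagonal matrix coefficients handles this correctly; alternatively, one can observe that the kernel identity behind (\ref{eq:weakcontsum}) holds for arbitrary direct sums --- an element of $C^*(G)$ annihilates $\bigoplus_a\ell^2(\{a\}_\phi)$ if and only if it annihilates every summand --- which gives $\bigoplus_a\g^\phi_a\prec\l_G$ at once.
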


\begin{proof}
The characteristic functions $\chi_{C_\phi(a)}$, $a\in G$,
are positively definite functions associated to $\l_G$,
because they are finite sums of translations of $\d_e$.
Hence, $\ind ^G_{C_\phi(a)} 1_{C_\phi(a)}\prec \l_G$
(cf. \cite[E.4.4]{BekkaHarpeValetteT}).
By Lemma \ref{lem:ogranichgammaphi} and the decomposition
of $\g^\phi_G$ we obtain $\g^\phi_G \prec \l_G$.
\end{proof}

\begin{teo}\label{teo:rfinimpliesamenab}
Suppose, $G$ is a finitely generated residually finite group and
$R(\phi)<\infty$ for some $\phi:G\to G$. Then $G$ is amenable.
\end{teo}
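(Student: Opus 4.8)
The plan is to combine the two previously established theorems about the twisted inner representation with the amenability characterization in terms of weak containment. Recall that Theorem \ref{teo:twistJabara} tells us that under our hypotheses (finitely generated, residually finite, and $R(\phi)<\infty$) every twisted centralizer $C_\phi(a)=St^{tw}_\phi(a)$ is finite. This is exactly the hypothesis needed to invoke Theorem \ref{teo:weakecontgammaphiregul}, which then yields $\g^\phi_G \prec \l_G$. So the first step is simply to assemble these: from $R(\phi)<\infty$ deduce $|C_\phi(a)|<\infty$ for all $a\in G$, hence $\g^\phi_G \prec \l_G$.

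The second, crucial step is to locate the trivial representation $1_G$ inside $\g^\phi_G$. Since $R(\phi)<\infty$, the group $G$ decomposes into finitely many Reidemeister classes $\{a_1\}_\phi,\dots,\{a_r\}_\phi$, and $\g^\phi_G=\bigoplus_{i=1}^r \g^\phi_{a_i}$. I would argue that $1_G$ is weakly contained in $\g^\phi_G$, most cleanly via the identity class: $\g^\phi_e$ is the restriction to $\ell^2(\{e\}_\phi)$. By Lemma \ref{lem:ogranichgammaphi}, each $\g^\phi_{a_i}\cong\ind^G_{C_\phi(a_i)}1_{C_\phi(a_i)}$, and a finite-index situation would give $1_G$ as a genuine subrepresentation; but here the coset space $C_\phi(a_i)\setminus G=\{a_i\}_\phi$ is infinite (Corollary \ref{cor:infiniteclasses}), so I cannot expect $1_G$ to be an actual summand. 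Instead I would show $1_G\prec\g^\phi_G$ directly: the constant function $\mathbf 1$ on a Reidemeister class is twisted-invariant, and normalized truncations $\xi_n$ supported on finite exhausting subsets give matrix coefficients $\langle\g^\phi_G(x)\xi_n,\xi_n\rangle\to 1$ uniformly on finite sets, establishing weak containment of the trivial representation.

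Combining the two steps via transitivity of weak containment (using the observation \eqref{eq:weakcontsum} on how weak containment behaves with direct sums, and that $\prec$ is transitive) gives $1_G \prec \g^\phi_G \prec \l_G$, hence $1_G \prec \l_G$. By the amenability characterization recalled in the Preliminary section—$G$ is amenable if and only if $1_G\prec\l_G$—we conclude that $G$ is amenable, which is exactly the assertion.

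The main obstacle I anticipate is the second step, proving $1_G\prec\g^\phi_G$ rigorously, precisely because the Reidemeister classes are infinite so no honest invariant $\ell^2$-vector exists. One must exhibit the approximate invariant vectors carefully: for a fixed class $\{a\}_\phi$ realized on $\ell^2(C_\phi(a)\setminus G)$, the twisted action is the quasi-regular action of $G$ on the coset space, and $1_G\prec\ind^G_{C_\phi(a)}1_{C_\phi(a)}$ will follow once one knows the pair $(G,C_\phi(a))$ (or really the relevant coset action) admits almost-invariant vectors; since this is a transitive action, weak containment of $1_G$ in the quasi-regular representation is equivalent to a Følner-type/amenability-of-the-action condition, so some care is needed to extract it without circularity. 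The cleanest route is probably to verify the almost-invariance of normalized indicator vectors $\xi_n=\norm{\1_{E_n}}^{-1}\1_{E_n}$ for a suitable exhaustion $E_n$ of the coset space by finite sets and check directly that $\norm{\g^\phi_G(x)\xi_n-\xi_n}\to 0$, which reduces to a boundary-to-volume estimate that I would need to justify from the structure of the action rather than assume.
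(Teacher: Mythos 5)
Your first step is exactly the paper's: Theorem \ref{teo:twistJabara} gives $|C_\phi(a)|<\infty$ for every $a$, and Theorem \ref{teo:weakecontgammaphiregul} then gives $\g^\phi_G\prec\l_G$. The gap is in your second step, and it is essential, not technical. You propose to prove $1_G\prec\g^\phi_G$ by almost-invariant vectors: normalized indicators $\xi_n$ of finite sets $E_n$ exhausting a Reidemeister class. But $\<\g^\phi_G(x)\xi_n,\xi_n\>\to 1$ uniformly on finite sets holds only if the $E_n$ are F{\o}lner sets for the twisted action on the class (it fails for an arbitrary exhaustion), and the existence of such F{\o}lner sets is \emph{equivalent} to the theorem you are trying to prove. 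Indeed, by Lemma \ref{lem:ogranichgammaphi} the class $\{e\}_\phi$ is the coset space $C_\phi(e)\setminus G$ with $G$ acting by right translations, and $C_\phi(e)$ is finite; hence the preimage in $G$ of a F{\o}lner set $E_n\ss C_\phi(e)\setminus G$ is a finite set $\til E_n$ with $|\til E_n\,\triangle\, \til E_n x|=|C_\phi(e)|\cdot|E_n\triangle E_n x|$ and $|\til E_n|=|C_\phi(e)|\cdot |E_n|$, i.e. a F{\o}lner sequence for $G$ itself. So the ``boundary-to-volume estimate'' you defer is not something one can ``justify from the structure of the action'': it is precisely amenability of $G$, and the route you outline is circular, as you yourself suspected.

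The paper's proof of $1_G\prec\g^\phi_G$ avoids F{\o}lner sets entirely by using the kernel characterization of weak containment: it suffices to show $C^*\Ker\g^\phi_G\ss C^*\Ker 1_G$. If $f=\sum_g f^g\d_g$ lies in $C^*\Ker\g^\phi_G$, then evaluating on each $\d_h$ one sees that the block sums $\sum_{g\in St^{tw}_\phi(h,x)}f^g$ vanish for every $h$ and every $x\in\{h\}_\phi$. Now take $h=e$: by (\ref{eq:stabilcoset}) and the computation preceding it, the sets $St^{tw}_\phi(e,x)$, $x\in\{e\}_\phi$, are exactly the cosets $s\cdot C_\phi(e)$ of the finite subgroup $C_\phi(e)=C_G(\phi)$, and these partition $G$ into finite blocks. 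Summing the vanishing block sums over the partition gives $\sum_{g\in G}f^g=0$, i.e. $f\in C^*\Ker 1_G$, whence $1_G\prec\g^\phi_G\prec\l_G$ and $G$ is amenable. This is a purely combinatorial statement about coefficients; the finiteness of the twisted stabilizers is what makes the blocks finite and the rearrangement legitimate. If you replace your second step by this kernel argument, the rest of your outline (transitivity of $\prec$ and the characterization of amenability by $1_G\prec\l_G$) goes through as written.
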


\begin{proof}
In this case (see Theorem \ref{teo:twistJabara} above) $|C_\phi(a)|<\infty$,
for any $a\in G$. Thus, by Theorem \ref{teo:weakecontgammaphiregul}
$\g^\phi_G \prec \l_G$. So, it is sufficient to
verify that $1_G \prec \g^\phi_G$, i.e.
$C^*\Ker \g^\phi_G \ss  C^*\Ker 1_G$.

Suppose, $f=\sum_{g\in G} f^g \d_g \in C^*\Ker \g^\phi_G$,
i.e.
\begin{equation}\label{eq:kernelgammaphi}
\sum_{g\in G} f^g \g^\phi_G(g) \d_h =
\sum_{g\in G} f^g L_g R_{\phi(g^{-1})} \d_h =
\sum_{g\in G} f^g \d_{gh\phi(g^{-1})}=0\qquad
\hbox{ for any } h\in G.
\end{equation}
Since $gh\phi(g^{-1})=g_1 h\phi(g_1^{-1})$ if and
only if $g_1,g_2 \in St^{tw}_\phi(h,x)$ for some
$x\in \{h\}_\phi$,
(\ref{eq:kernelgammaphi}) is equivalent to
\begin{equation}\label{eq:kernelgammaphi1}
\sum_{g\in St^{tw}_\phi(h,x)} f^g =0, \quad
\hbox{ for any } h\in G \hbox{ and }x\in \{h\}_\phi.
\end{equation}
Take any $h$, e.g. $h=e$. Then $St^{tw}_\phi(e,e)=C_\phi(e)$
is a finite subgroup of $G$ and the other $St^{tw}_\phi(h,x)$
form the set of all cosets with the respect to this subgroup.
This implies $\sum_{g\in G} f^g=0$ (for any reasonable
convergence, because the subgroup is finite). In particular,
$f\in C^*\Ker 1_G$ and $1_G \prec \g^\phi_G \prec \l_G$.
\end{proof}

\begin{rk}
Main results of \cite{Jabara} illustrate this theorem.
\end{rk}

\begin{cor}\label{cor:mainrinf}
Any non-amenable residually finite finitely generated group is an
$R_\infty$ group.
\end{cor}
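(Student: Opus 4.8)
The plan is to obtain this statement as the immediate contrapositive of Theorem \ref{teo:rfinimpliesamenab}. Recall that $G$ being an $R_\infty$ group means precisely that $R(\phi)=\infty$ for every automorphism $\phi:G\to G$. So I would argue by contradiction: suppose $G$ is a non-amenable, finitely generated, residually finite group that fails to be $R_\infty$. Then by definition there exists at least one automorphism $\phi:G\to G$ with $R(\phi)<\infty$.

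Now the hypotheses of Theorem \ref{teo:rfinimpliesamenab} are satisfied verbatim: $G$ is finitely generated and residually finite, and we have exhibited a $\phi$ with $R(\phi)<\infty$. Applying that theorem directly yields that $G$ is amenable, contradicting the standing assumption of non-amenability. Hence no automorphism $\phi$ with $R(\phi)<\infty$ can exist, which is exactly the assertion that $G$ is an $R_\infty$ group.

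I do not expect any genuine obstacle here, since all the substantive work has already been absorbed into Theorem \ref{teo:rfinimpliesamenab} and, upstream of it, into the chain running through Theorem \ref{teo:twistJabara} (which guarantees finiteness of the twisted stabilizers $C_\phi(a)$ once $R(\phi)<\infty$) and Theorem \ref{teo:weakecontgammaphiregul} (which then delivers $\g^\phi_G\prec\l_G$, feeding the weak-containment sandwich $1_G\prec\g^\phi_G\prec\l_G$). The corollary is a pure logical restatement: it merely packages the theorem in contrapositive form, recognizing that ``$R(\phi)<\infty$ for some $\phi$ forces amenability'' is equivalent to ``non-amenability forces $R(\phi)=\infty$ for all $\phi$.'' The only point worth a moment's care is that the definition of $R_\infty$ quantifies over \emph{all} automorphisms while the theorem is stated existentially (``for some $\phi$''); under negation these match correctly, so the deduction is clean.
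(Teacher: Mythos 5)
Your proposal is correct and is exactly the paper's own route: the corollary is stated immediately after Theorem \ref{teo:rfinimpliesamenab} with no separate argument, being precisely its contrapositive, and your handling of the quantifier over automorphisms is the right (and only) point needing care.
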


\begin{rk}\label{rk:onlufinit}
In fact we have used only the estimation $|C_\phi(a)|<\infty$, for
any $a\in G$. This can occur not only when the group is residually
finite. The following case is especially important for
applications.
\end{rk}

Suppose, we have an exact sequence
\begin{equation}\label{eq:exactckonjad}
    0\to F \to G \ola{p} H=G/F \to 0,\qquad |F|=m <\infty,
\end{equation}
$G$ is finitely generated, $H$ is residually finite and
non-amenable, $F$ not necessary characteristic, in particular,
$\phi$-invariant (otherwise we have the $R_\infty$ property with
the help of epimorphity of mapping of Reidemeister classes). As in
Theorem \ref{teo:FixJabara}, suppose $\{x_1,x_2,\dots\}=C_G
(\phi)$ is infinite. Let $H_n$ be a characteristic subgroup of
finite index in $H$ such that the quotient map $p_n:H\to
H/H_n=:F_n$ is injective on $\{p(x_1),\dots, p(x_n)\}$. Then
$G_n:=\Ker (p_n\circ p)$ is a subgroup of $G$ of index $|F_n|$. As
above, we can assume $G_n$ to be characteristic, since $G$ is
finitely generated. Denote $s:= \# \{p(x_1),\dots, p(x_n)\}$.
Thus, $s\ge n/m$. Then
$$
R_G(\phi) \ge R_{F_n} (\phi_n) \ge \sqrt {\log_2
|C_{F_n}(\phi_n)|} \ge \sqrt {\log_2 s} \ge \sqrt {\log_2 n-\log_2
m}.
$$
Since $n$ was arbitrary, we have
$R_G(\phi)=\infty$. Thus, if $R_G(\phi)<\infty$, then
$C_G(\phi)<\infty$. Hence,
$C_\phi(a)<\infty$, and by Remark \ref{rk:onlufinit} we obtain the
following statement.
\begin{prop}\label{prop:usilennonamen}
Suppose, we have $($\ref{eq:exactckonjad}$)$, where $G$ is
finitely generated, $H$ is residually finite and non-amenable,
$|F|<\infty$. Then $G$ is an $R_\infty$-group.
\end{prop}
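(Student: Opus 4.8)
The plan is to reduce Proposition~\ref{prop:usilennonamen} to the machinery already assembled for the residually finite case, exploiting Remark~\ref{rk:onlufinit}, which states that the genuine hypothesis needed in Theorem~\ref{teo:rfinimpliesamenab} and its consequences is merely $|C_\phi(a)|<\infty$ for every $a\in G$, rather than residual finiteness of $G$ itself. So the whole problem becomes: show that if $R_G(\phi)<\infty$, then $C_G(\phi)=\Fix(\phi)$ is finite; once that is known, Lemma~\ref{lem:redklassed} and Corollary~\ref{cor:rphiequaltaurphi} upgrade it to finiteness of every $C_\phi(a)=C_G(\t_{a^{-1}}\circ\phi)$, exactly as in Theorem~\ref{teo:twistJabara}, and then the weak-containment argument gives a contradiction with non-amenability.

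\smallskip
First I would argue by contraposition: assume $C_G(\phi)=\{x_1,x_2,\dots\}$ is infinite and derive $R_G(\phi)=\infty$. The obstacle compared with Theorem~\ref{teo:FixJabara} is that $G$ need not be residually finite, so I cannot directly separate finitely many fixed points by characteristic finite-index subgroups of $G$. The remedy is to push everything down to $H$, which \emph{is} residually finite and non-amenable: apply $p:G\to H$ to the fixed points and choose, for each $n$, a characteristic finite-index subgroup $H_n\le H$ with the quotient map $p_n:H\to F_n:=H/H_n$ injective on $\{p(x_1),\dots,p(x_n)\}$ (Remark~\ref{rk:fgrfmainprop}). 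Pulling back, $G_n:=\Ker(p_n\circ p)$ has finite index in $G$ and may be taken characteristic since $G$ is finitely generated.

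\smallskip
The key quantitative step is to control how many distinct images survive. Since the kernel $F$ has order $m<\infty$, the fibres of $p$ over $H$ have at most $m$ elements, so among $x_1,\dots,x_n$ the number $s:=\#\{p(x_1),\dots,p(x_n)\}$ of distinct images in $H$ satisfies $s\ge n/m$. These $s$ elements lie in $C_{F_n}(\phi_n)$ for the induced automorphism $\phi_n$ on $F_n$, whence by the epimorphism estimate (\ref{eq:epiRade}) together with the finite-group inequality (\ref{eq:FixFinJabara}),
$$
R_G(\phi)\ge R_{F_n}(\phi_n)\ge \sqrt{\log_2|C_{F_n}(\phi_n)|}\ge \sqrt{\log_2 s}\ge \sqrt{\log_2 n-\log_2 m}.
$$
Letting $n\to\infty$ forces $R_G(\phi)=\infty$, so finiteness of $R_G(\phi)$ indeed implies $|C_G(\phi)|<\infty$.

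\smallskip
The expected main obstacle is precisely this bookkeeping with $F$: one must verify that $\phi$-invariance of $F$ is not needed (the very first case, $F$ not $\phi$-invariant, is dispatched at once by the epimorphity of the induced map on Reidemeister classes, giving $R_\infty$ immediately), and that the factor $1/m$ loss in the count of distinct images is harmless because only the \emph{growth} in $n$ matters. Once $|C_G(\phi)|<\infty$ is secured, the conclusion is automatic: as in Theorem~\ref{teo:twistJabara} one gets $|C_\phi(a)|<\infty$ for all $a$, so by Remark~\ref{rk:onlufinit} the hypotheses of Theorem~\ref{teo:rfinimpliesamenab} are met, $G$ is amenable, contradicting the standing non-amenability assumption. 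Hence no $\phi$ can have $R_G(\phi)<\infty$, i.e.\ $G$ is an $R_\infty$-group.
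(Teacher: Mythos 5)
Your proposal is correct and coincides with the paper's own proof: the same contraposition (an infinite $C_G(\phi)=\{x_1,x_2,\dots\}$ yields $s\ge n/m$ distinct images in $H$, separated by a characteristic finite-index subgroup $H_n$, pulled back to a characteristic finite-index $G_n\ss G$, giving $R_G(\phi)\ge R_{F_n}(\phi_n)\ge\sqrt{\log_2 n-\log_2 m}$ via (\ref{eq:epiRade}) and (\ref{eq:FixFinJabara})), followed by the same passage to $|C_\phi(a)|<\infty$ for all $a$ and the amenability contradiction through Remark \ref{rk:onlufinit} and Theorem \ref{teo:rfinimpliesamenab}. The one slip is in your parenthetical side remark: it is the case where $F$ \emph{is} $\phi$-invariant that is dispatched at once by epimorphity of the induced map on Reidemeister classes (only then does $\phi$ descend to $H$, where Corollary \ref{cor:mainrinf} applies), not the case where it is not --- but this is harmless, since your counting argument never uses an induced automorphism of $H$ (only the images $p(x_i)$ as a set and the characteristic subgroups $G_n$ of $G$), and therefore covers the non-invariant case directly.
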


Another large class of examples arises from
\begin{prop}\label{prop:rinftysubgrcsimple}
Let $G$ be a normal finitely generated subgroup of a residually
finite $C^*$-simple group. Then $G$ is an $R_\infty$-group.
\end{prop}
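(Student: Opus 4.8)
The plan is to reduce the statement to Theorem~B (Corollary~\ref{cor:mainrinf}), which guarantees that a finitely generated, residually finite, \emph{non-amenable} group is $R_\infty$. Write $\G$ for the ambient residually finite $C^*$-simple group, so that $G\triangleleft\G$ is the given finitely generated normal subgroup. Since $G$ is finitely generated by hypothesis, it suffices to verify that $G$ is residually finite and that $G$ is non-amenable. The first of these is routine; the whole weight of the argument lies in the second.

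Residual finiteness of $G$ is inherited from $\G$: given a finite set $K\ss G\setminus\{e\}$, residual finiteness of $\G$ produces a normal subgroup $H\triangleleft\G$ of finite index with $H\cap K=\varnothing$, and then $H\cap G$ is a normal subgroup of finite index in $G$ disjoint from $K$, as demanded by Definition~\ref{dfn:resfin}. For the non-amenability I would isolate the following key claim: \emph{a $C^*$-simple group $\G$ has no nontrivial amenable normal subgroup.} To prove it, suppose $N\triangleleft\G$ is amenable, so that $1_N\prec\l_N$ by the amenability characterization listed in Section~\ref{sec:prelim}. Induction $\Ind_N^\G$ is continuous with respect to weak containment (see~\cite{BekkaHarpeValetteT}), and one has the standard identifications $\Ind_N^\G 1_N\cong\l_{\G/N}$ (the quasi-regular representation, using that $N$ is normal, cf.\ Lemma~\ref{lem:ogranichgammaphi}) and $\Ind_N^\G\l_N\cong\l_\G$ (induction in stages). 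Hence
$$
\l_{\G/N}=\Ind_N^\G 1_N\prec\Ind_N^\G\l_N\cong\l_\G .
$$
By $C^*$-simplicity of $\G$ (taking $\rho=\l_{\G/N}$ in the defining implication) the reverse weak containment $\l_\G\prec\l_{\G/N}$ holds, so $\l_\G$ and $\l_{\G/N}$ are weakly equivalent and $C^*\Ker\l_\G=C^*\Ker\l_{\G/N}$. Since $N$ is normal it acts trivially on $\G/N$, so $\l_{\G/N}(n)=\l_{\G/N}(e)$ and therefore $\d_n-\d_e\in C^*\Ker\l_{\G/N}=C^*\Ker\l_\G$ for every $n\in N$; faithfulness of the regular representation $\l_\G$ then forces $n=e$, whence $N=\{e\}$, proving the claim.

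Applying the claim to the nontrivial normal subgroup $G$ shows that $G$ cannot be amenable (in particular $G$ is infinite). Thus $G$ is finitely generated, residually finite, and non-amenable, and Corollary~\ref{cor:mainrinf} yields that $G$ is an $R_\infty$-group. The main obstacle is the key claim on amenable normal subgroups: its proof rests on the continuity of induction under weak containment, on the identities $\Ind_N^\G 1_N\cong\l_{\G/N}$ and $\Ind_N^\G\l_N\cong\l_\G$, and on the faithfulness of $\l_\G$; once these are in place the remaining steps are direct bookkeeping.
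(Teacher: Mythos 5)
Your proof is correct, and its overall skeleton is the same as the paper's: inherit residual finiteness of $G$ from the ambient group $\G$, establish that $G$ is non-amenable, and then invoke Corollary \ref{cor:mainrinf} (Theorem B). The difference lies entirely in how non-amenability is obtained. The paper disposes of it in one line by citing \cite[Prop.~3]{Harpe2007simple}, which is exactly your key claim that a $C^*$-simple group has no nontrivial amenable normal subgroup; you instead prove that claim from scratch, via $1_N\prec\l_N$ (amenability), continuity of induction under weak containment, the identifications $\Ind_N^\G 1_N\cong\l_{\G/N}$ and $\Ind_N^\G\l_N\cong\l_\G$ (induction in stages), the paper's definition of $C^*$-simplicity to get $C^*\Ker\l_\G=C^*\Ker\l_{\G/N}$, and finally faithfulness of $\l_\G$ to force $N=\{e\}$. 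All of these steps are sound, and your argument is essentially the standard proof of de la Harpe's proposition. What each route buys: the paper's citation is shorter, while your version is self-contained and shows that the needed fact follows from precisely the weak-containment machinery the paper already sets up in Sections \ref{sec:prelim} and \ref{sec:amentwistinnrinfty}, with no external input. One shared and harmless caveat: both you and the paper tacitly assume $G\ne\{e\}$ (you say ``nontrivial normal subgroup''), as the trivial group is of course not an $R_\infty$-group; the statement should be read with that convention.
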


\begin{proof}
Since $G$ is non-amenable (see e.g. \cite[Prop.
3]{Harpe2007simple}), $G$ enter conditions of Corollary
\ref{cor:mainrinf}.
\end{proof}

\section{Rational points}\label{sec:rationalpoints}
In this section we show that not every finite-dimensional
representation can be fixed by $\wh\phi$ if $R(\phi)<\infty$.
\begin{dfn}
Let $[\r]\in \wh G_f$, $g\mapsto T_g$ be a (class of a) finite-dimensional
representation. We say that $\r$ is \emph{rational} if the number
of distinct $T_g$'s is finite, and \emph{irrational} otherwise.
\end{dfn}

\begin{rk}
Evidently, $\r$ is rational if and only if it can be factorized
through a homomorphism $G\to F$ on a finite group.
\end{rk}

As a consequence of the proof of Theorem \ref{teo:TBFTforRF}
(see also Corollary \ref{cor:phiconjRF})
we obtain the following statement.

\begin{teo}
Suppose, $G$ is a finitely generated
residually finite group, and $\phi$ is its automorphism
with $R(\phi)<\infty$. Then no irrational representation is fixed by
$\wh\phi$.
\end{teo}

\begin{teo}
Let $G$ be a finitely generated group and for an automorphism $\phi$
at least one of the following two conditions holds:

\begin{enumerate}[\rm 1).]
    \item There exist infinitely many finite-dimensional
    representation classes in $\wh G$ fixed by $\wh\phi$.
    \item There exists an irrational representation $\rho$ fixed
    by $\wh\phi$.
\end{enumerate}

Then $R(\phi)=\infty$.

In particular, if we have one of these conditions for
every automorphism $\phi$, then $G$ has $R_\infty$
property.
\end{teo}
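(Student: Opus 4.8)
The plan is to argue by cases and reduce each hypothesis to something already available. Throughout I assume, for contradiction in spirit, that we analyze what $R(\phi)<\infty$ would force. The final clause is then immediate: $R_\infty$ means exactly $R(\phi)=\infty$ for every $\phi$, so if one of the two conditions holds for each $\phi$, the conclusion $R(\phi)=\infty$ for each $\phi$ is precisely the $R_\infty$ property. Note also that the two conditions are of genuinely different character — one about infinitely many fixed classes, the other about a single irrational fixed class — so I treat them by different mechanisms.

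For condition 1) I would invoke Lemma \ref{lem:invarfunccodimetc} directly, with no appeal to residual finiteness. To each finite-dimensional $[\rho]\in\wh G_f$ with $\wh\phi[\rho]=[\rho]$, part (1) attaches a nonzero twisted invariant matrix coefficient $\w_\rho$, and part (4) (through Remark \ref{rk:separcoefffdim}) guarantees that the functions attached to distinct fixed classes are linearly independent. Each $\w_\rho$ is constant on Reidemeister classes, hence is a $\phi$-class function. But any $k$ linearly independent functions that are constant on Reidemeister classes force at least $k$ distinct classes. Thus $R(\phi)$ is bounded below by the number of $\wh\phi$-fixed classes in $\wh G_f$, and if that number is infinite then $R(\phi)=\infty$.

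For condition 2) let $\rho$ on $V$ be irreducible, irrational, with $\wh\phi[\rho]=[\rho]$; here the plan is to pass to the linear image. By Schur's lemma there is an invertible $B$ with $\rho(\phi(g))=B^{-1}\rho(g)B$ for all $g$, and consequently $\Ker\rho$ is $\phi$-invariant: if $\rho(g)=1$ then $\rho(\phi(g))=1$. It is of course normal, so $\phi$ descends to an automorphism $\ov\phi$ of $\G:=G/\Ker\rho\cong\rho(G)$, and (\ref{eq:epiRade}) yields $R_G(\phi)\ge R_{\G}(\ov\phi)$. Now $\G$ is finitely generated and linear, hence residually finite \cite{malcev}, and the induced faithful map $\ov\rho:\G\hookrightarrow GL(V)$ is irreducible, is fixed by $\wh{\ov\phi}$ (intertwined by the same $B$), and is irrational because $\G$ is infinite (since $\rho$ is irrational). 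The preceding theorem, applied to the residually finite group $\G$, states that $R_{\G}(\ov\phi)<\infty$ would forbid any irrational fixed representation; as $\ov\rho$ is exactly such a representation, this forces $R_{\G}(\ov\phi)=\infty$, whence $R_G(\phi)=\infty$.

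I expect the substance to sit entirely in condition 2), and specifically in the reduction step: recognizing that a single fixed irreducible $\rho$ makes $\Ker\rho$ a $\phi$-invariant normal subgroup whose quotient is finitely generated, residually finite (being linear), and on which the \emph{previously established} residually finite case applies verbatim. Condition 1) is essentially a bookkeeping consequence of Lemma \ref{lem:invarfunccodimetc}. Two small points to keep honest: the intertwiner $B$ need only \emph{exist}, as its scalar ambiguity is irrelevant to the kernel computation; and ``irrational'' for $\ov\rho$ is precisely the infinitude of $\G$, which is where the hypothesis that $\rho$ is irrational is consumed.
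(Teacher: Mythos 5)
Your proposal is correct, and your treatment of condition 1) coincides with the paper's (both are immediate from Lemma \ref{lem:invarfunccodimetc}, since the space of $\phi$-class functions has dimension $R(\phi)$ when the latter is finite). For condition 2), however, you take a genuinely different route. The paper argues directly and self-containedly: if $R(\phi)<\infty$, the twisted invariant matrix coefficient $f_\rho(g)=\Trace(a\rho(g))$ supplied by Lemma \ref{lem:invarfunccodimetc} is a $\phi$-class function and hence takes finitely many values; its left translates span a finite-dimensional space $W$ equivalent to a sum of copies of $\rho$, all of whose elements are then finitely valued, so the common level sets form a finite partition of $G$ permuted by left translation, whence the representation on $W$ --- and with it its subrepresentation $\rho$ --- factors through a finite group, i.e.\ $\rho$ is rational, a contradiction. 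That argument uses neither residual finiteness nor Theorem A. You instead observe that $\Ker\rho$ is $\phi$-invariant (correct, though the existence of the intertwiner $B$ is just the definition of $[\rho\circ\phi]=[\rho]$; Schur's lemma only adds uniqueness up to scalar), pass to the quotient $G/\Ker\rho$, which is finitely generated and linear, hence residually finite by Mal'cev, check that the induced faithful representation is irreducible, $\wh\phi$-fixed and irrational, and then combine the preceding theorem of Section \ref{sec:rationalpoints} with the inequality (\ref{eq:epiRade}). This reduction is valid and non-circular, since that preceding theorem rests only on the proof of Theorem \ref{teo:TBFTforRF}; the trade-off is that your proof makes the statement logically dependent on the full TBFT machinery, whereas the paper's level-set argument is elementary and independent of it. What your route buys in exchange is a cleanly isolated and reusable observation: any $\wh\phi$-fixed irreducible finite-dimensional representation produces a $\phi$-invariant normal subgroup whose quotient is finitely generated linear, hence residually finite, so the residually finite theory applies to it verbatim.
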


\begin{proof}
1) This follows immediately from Lemma \ref{lem:invarfunccodimetc}.

2) Suppose that $f_\r(g)=\Trace(a\r(g))$
(see Lemma \ref{lem:invarfunccodimetc}) takes
finitely many values and will arrive to a contradiction.
Indeed, $f_\r$ is a non-trivial matrix coefficient. Hence,
(see, e.g. \cite{Kirillov}) its left translations generate
a finite-dimensional representation, which is equivalent
to a direct sum of several copies of $\r$. The space $W$
of this representation has a basis $L_{g_1} f_\r,\dots,L_{g_k} f_\r$.
Thus, all functions from $W$ take only finitely many
values (with level sets of the form $\cap_i g_i U_j$, where
$U_j$ are the level sets of $f_\r$). Taking unions of these
sets (if necessary) we can form a finite partition
$G=V_1\sqcup\dots\sqcup V_m$ such that elements of $W$ are
constant on the elements of the partition and for each
pair $V_i \ne V_j$ there exists a function from $W$ taking distinct
values on them. Thus any left translation maps $V_i$ onto
each other and the representation $G$ on $W$ factorizes
through (a subgroup of) the permutation group on $m$
elements, i.e. a finite group. The same is true for its
subrepresentation $\r$, thus it is rational.
A contradiction.\footnote{Probably this argument
is known, but we have not found an appropriate
reference.}
\end{proof}


\section{Isogredience classes}\label{sec:isogred}


\begin{dfn}\label{dfn:isogred}(see \cite{ll})
Suppose, $\F \in \Out (G):= \Aut (G)/ \Inn (G)$. We say that
$\a,\b\in \F$ are \emph{isogredient} (or \emph{similar})
if $\b=\t_h \circ \a \circ \t_h^{-1}$ for some $h\in G$.

Let $\mathcal S(\F)$ be the set of isogredience classes of $\F$.
If $\F=\Id_G$, then above $\a$ and $\b$ are inner, say
$\a = \t_g$, $\b=\t_s$. Since elements of center $Z(G)$ give
trivial inner automorphisms, we may suppose $g,s\in G/Z(G)$.
Then the equivalence relation takes the form
$\t_s=\t_{h g h^{-1}}$, i.e., $s$ and $g$ are conjugate
in $G/Z(G)$. Thus, $\mathcal S(\Id)$ is the set of conjugacy
classes of $G/Z(G)$.

Denote by $S(\F)$ the cardinality of $\mathcal S(\F)$.
\end{dfn}

For a topological motivation of the above
definition of the isogredience
suppose that $G \cong \pi_1(X)$, $X$ is a compact space, and
$\Phi$ is induced  by a continuous map   $f: X \rightarrow X $.
Let   $p:\widetilde{X}\rightarrow X$ be the
universal covering of $X$ ,
$\widetilde{f}:\widetilde{X}\rightarrow \widetilde{X}$ a
lifting of $f$, i.e. $p\circ\widetilde{f}=f\circ p$.
 Two liftings $\tilde{f}$ and
$\tilde{f}^\prime$ are called
{\it isogredient} or conjugate if there is a
$\gamma\in G $ such that $\tilde{f}^\prime =
\gamma\circ\tilde{f}\circ\gamma^{-1}$.
Different  lifting  may have very different properties.
 Nielsen observed \cite{nie1}
 that conjugate lifting of homeomorphism of
 surface  have similar dynamical properties.
 This led him to the definition of the
 isogredience of liftings in this case.
 Later Reidemeister \cite{reid:re} and
 Wecken \cite{Weck}
 succeeded in generalizing the theory to
 continuous maps of compact polyhedra.

 The set of isogredience classes of
 automorphisms representing a
 given outer automorphism and the
 notion of index $\Ind(\Phi)$
 defined via the set of isogredience
 classes are strongly related to important
 structural properties of $\Phi$
 (see \cite{GabJaegLevittLustig98Duke}),
 for example in another (with respect to Bestvina--Handel
 \cite{BestvinaHandel92})
 proof of the Scott conjecture \cite{GabLeviLus98}.

One of the main results of \cite{ll} is that for any
non-elementary hyperbolic group and any $\F$ the set
$\mathcal S(\F)$ is infinite, i.e.,
$S(\F)=\infty$. We will extend this result.
First, we introduce an appropriate definition.

\begin{dfn}\label{dfn:Sinfin}
A group $G$ is an $S_\infty$-\emph{group} if for
any $\F$ the set $\mathcal S(\F)$ is infinite,
i.e., $S(\F)=\infty$.
\end{dfn}
Thus, the above result from \cite{ll} says:
any non-elementary hyperbolic group is an $S_\infty$-group.
On the other hand, finite and Abelian groups are
evidently non $S_\infty$-groups.

Now, let us generalize the above calculation
for $\F=\Id$ to a general $\F$  (see \cite[p. 512]{ll}).
Two representatives of $\F$ have form $\t_s\circ \a$,
$\t_q\circ  \a$,
with some $s,q \in G$. They are isogredient if and only
if
$$
\t_q \circ \a =\t_g\circ  \t_s\circ  \a\circ \t_g^{-1}
=\t_g\circ  \t_s\circ  \t_{\a(g^{-1})} \circ\a,
$$
$$
\t_q=\t_{gs\a(g^{-1})},\qquad q=gs\a(g^{-1})c,\quad c\in
Z(G).
$$
So, the following statement is proved.
\begin{lem}\label{lem:SandR}
$S(\F)=R_{G/Z(G)}(\ov\a)$, where $\a$ is any representative
of $\F$ and $\ov\a$ is induced by $\a$ on $G/Z(G)$.
\end{lem}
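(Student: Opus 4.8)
The plan is to realize both sides as the quotient of $G/Z(G)$ by one and the same equivalence relation, via the bookkeeping map that records the inner part of a representative of $\F$. First I would fix a representative $\a$ of $\F$ and observe that, since $\Inn(G)$ is normal, every automorphism in the coset $\F=\a\,\Inn(G)$ can be written as $\t_s\circ\a$ for some $s\in G$; moreover $\t_s\circ\a=\t_q\circ\a$ iff $\t_s=\t_q$ iff $s^{-1}q\in Z(G)$, i.e. iff $\bar s=\bar q$ in $\bar G:=G/Z(G)$. Thus $\bar s\mapsto \t_s\circ\a$ is a well-defined \emph{bijection} from $\bar G$ onto the set of representatives of $\F$. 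Since $Z(G)$ is characteristic it is $\a$-invariant, so the induced automorphism $\ov\a:\bar G\to\bar G$ is defined.

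Next I would transport the isogredience relation through this bijection. Using $\t_h^{-1}=\t_{h^{-1}}$, the identity $\a\circ\t_{h^{-1}}=\t_{\a(h^{-1})}\circ\a$, and $\t_h\circ\t_s=\t_{hs}$, one computes
\[
\t_h\circ(\t_s\circ\a)\circ\t_h^{-1}=\t_{hs\a(h^{-1})}\circ\a .
\]
Hence $\t_s\circ\a$ and $\t_q\circ\a$ are isogredient iff $\t_q=\t_{hs\a(h^{-1})}$ for some $h\in G$, which by the previous paragraph is equivalent to $\bar q=\bar h\,\bar s\,\ov\a(\bar h)^{-1}$ in $\bar G$ for some $\bar h\in\bar G$ (as $h$ ranges over $G$, $\bar h$ ranges over all of $\bar G$). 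But this is exactly the twisted conjugacy relation $\bar s\sim \bar h\,\bar s\,\ov\a(\bar h^{-1})$ that defines the Reidemeister classes of $\ov\a$ on $\bar G$.

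Consequently the bijection $\bar s\mapsto\t_s\circ\a$ carries Reidemeister classes of $\ov\a$ precisely onto isogredience classes of $\F$, which yields $S(\F)=R_{G/Z(G)}(\ov\a)$. To justify that the right-hand side does not depend on the chosen representative (as the statement asserts), I would note that replacing $\a$ by $\t_t\circ\a$ replaces $\ov\a$ by $\t_{\bar t}\circ\ov\a$, and by Corollary \ref{cor:rphiequaltaurphi} we have $R_{G/Z(G)}(\t_{\bar t}\circ\ov\a)=R_{G/Z(G)}(\ov\a)$.

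The computations are routine; the only point requiring care — the main (minor) obstacle — is matching the two equivalence relations in both directions, i.e. checking that the central ambiguity in $s\mapsto\t_s$ is \emph{exactly} absorbed by passing to $G/Z(G)$, so that the correspondence neither merges distinct isogredience classes nor splits a single one.
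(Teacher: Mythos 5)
Your proposal is correct and takes essentially the same route as the paper: both hinge on the computation $\t_h\circ(\t_s\circ\a)\circ\t_h^{-1}=\t_{hs\a(h^{-1})}\circ\a$ together with the observation that the kernel of $s\mapsto\t_s$ is exactly $Z(G)$, so that isogredience of the representatives $\t_s\circ\a$ of $\F$ becomes precisely twisted conjugacy of $\ov\a$ in $G/Z(G)$. Your extra care (the explicit bijection $\bar s\mapsto\t_s\circ\a$ and the independence of the chosen representative via Corollary \ref{cor:rphiequaltaurphi}) merely spells out what the paper leaves implicit.
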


Since $Z(G)$ is a characteristic subgroup, we obtain
from Lemma \ref{lem:SandR} and epimorphity (see before
(\ref{eq:epiRade})) the following statement
(in one direction it was discussed in
\cite[Remark 2.1]{gowon}).

\begin{teo}\label{teo:SandRfinZ}
Suppose,
$|Z(G)|<\infty$.
Then $G$ is an $R_\infty$-group
if and only if
$G$ is an $S_\infty$-group.
\end{teo}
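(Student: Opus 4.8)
The plan is to translate both properties into statements about Reidemeister numbers on the quotient $G/Z(G)$ and to play them off against Lemma \ref{lem:SandR}, which identifies $S(\F)$ with $R_{G/Z(G)}(\ov\a)$ for any representative $\a$ of $\F$. Since $Z(G)$ is characteristic, every $\phi\in\Aut(G)$ preserves $Z(G)$ and descends to an automorphism $\ov\phi$ of $G/Z(G)$; moreover $\phi$ is a representative of its own class $\F\in\Out(G)$, so Lemma \ref{lem:SandR} reads $S(\F)=R_{G/Z(G)}(\ov\phi)$. Consequently $G$ is $S_\infty$ precisely when $R_{G/Z(G)}(\ov\phi)=\infty$ for every $\phi\in\Aut(G)$, whereas $G$ is $R_\infty$ precisely when $R_G(\phi)=\infty$ for every $\phi$. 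The theorem reduces to comparing $R_G(\phi)$ and $R_{G/Z(G)}(\ov\phi)$ in both directions.

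One implication is immediate and does not need finiteness of $Z(G)$. If $G$ is $S_\infty$, fix $\phi\in\Aut(G)$ over $\F\in\Out(G)$; as $Z(G)$ is normal and $\phi$-invariant, the epimorphity inequality (\ref{eq:epiRade}) gives $R_G(\phi)\ge R_{G/Z(G)}(\ov\phi)=S(\F)=\infty$, so $G$ is $R_\infty$.

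For the converse I need the opposite comparison, and this is where $|Z(G)|<\infty$ is used. I will prove the bound $R_G(\phi)\le |Z(G)|\cdot R_{G/Z(G)}(\ov\phi)$. The quotient map $p:G\to G/Z(G)$ carries $\phi$-classes onto $\ov\phi$-classes, so the preimage $p^{-1}(\{\ov g\}_{\ov\phi})$ of a single $\ov\phi$-class is a union of $\phi$-classes of $G$, and it suffices to bound the number of $\phi$-classes in each such fiber by $|Z(G)|$. Fixing a lift $g$ of $\ov g$, any $h$ in the fiber satisfies $\ov h=\ov x\,\ov g\,\ov\phi(\ov x^{-1})$ for some $\ov x$; lifting $x$ gives $h=x\,g\,\phi(x^{-1})\,z$ with $z\in Z(G)$, and then $\phi$-conjugating by $x^{-1}$ together with centrality of $z$ yields $x^{-1}h\,\phi(x)=g\,\phi(x^{-1})\,z\,\phi(x)=g\,z$. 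Hence every $h$ in the fiber is $\phi$-conjugate to $g\,z$ for some $z\in Z(G)$, so the fiber contains at most $|Z(G)|$ distinct $\phi$-classes; summing over the $\ov\phi$-classes gives the bound.

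The statement follows at once: if $G$ is $R_\infty$ and $\F$ is arbitrary with representative $\phi$, then $R_G(\phi)=\infty$, and since $|Z(G)|<\infty$ the bound forces $R_{G/Z(G)}(\ov\phi)=\infty$, i.e.\ $S(\F)=\infty$ by Lemma \ref{lem:SandR}; thus $G$ is $S_\infty$. The main obstacle is exactly this reverse bound, since the epimorphity (\ref{eq:epiRade}) only estimates $R_{G/Z(G)}(\ov\phi)$ from below; to argue backwards I must check that quotienting by the finite central subgroup fuses at most $|Z(G)|$ twisted classes into each class below.
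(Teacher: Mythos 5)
Your proof is correct and follows essentially the same route as the paper: the paper deduces the theorem from Lemma \ref{lem:SandR} together with the epimorphity of the quotient map on Reidemeister classes (the inequality (\ref{eq:epiRade})), which is exactly your reduction. The only difference is that you spell out explicitly the fiber-counting bound $R_G(\phi)\le |Z(G)|\cdot R_{G/Z(G)}(\ov\phi)$, which the paper leaves implicit when it combines epimorphity with finiteness of the center to get the direction $R_\infty\Rightarrow S_\infty$.
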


\begin{rk}
Of course, this argument is applicable to an
individual $\F$ as well.
\end{rk}

Theorem \ref{teo:SandRfinZ} and Corollary
\ref{cor:mainrinf} imply

\begin{cor}\label{cor:isogrnonamenfincen}
Suppose, $G$ is a finitely generated non-amenable
residually finite group with $|Z(G)|<\infty$.
Then $G$ is an $S_\infty$-group.
\end{cor}

\begin{cor}\label{cor:isogrsimple}
Suppose, $G$ is a finitely generated $C^*$-simple
residually finite group.
Then $G$ is an $S_\infty$-group.
\end{cor}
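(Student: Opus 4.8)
The plan is to verify directly that the hypotheses of Corollary \ref{cor:isogrnonamenfincen} are met, since that corollary already yields the $S_\infty$ property from four ingredients: finite generation, non-amenability, residual finiteness, and finiteness of the center. Finite generation and residual finiteness are assumed outright, so the only points requiring argument are non-amenability and the finiteness of $Z(G)$.

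For non-amenability I would invoke the same input used in the proof of Proposition \ref{prop:rinftysubgrcsimple}: a $C^*$-simple group is never amenable (see \cite[Prop. 3]{Harpe2007simple}). For the finiteness of the center I would argue that $Z(G)$, being abelian, is amenable, and being the center it is normal; hence it is contained in the amenable radical of $G$. Since a $C^*$-simple group has trivial amenable radical (again \cite{Harpe2007simple}), it follows that $Z(G)=\{e\}$, so in particular $|Z(G)|<\infty$.

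With all four hypotheses verified, Corollary \ref{cor:isogrnonamenfincen} applies and shows that $G$ is an $S_\infty$-group.

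The only non-formal step is the triviality of $Z(G)$, and this is precisely where the structure theory of $C^*$-simple groups enters; everything else is a direct match to the hypotheses of an already-established corollary. I would therefore expect this verification of finiteness (indeed triviality) of the center to be the main point, with the remaining steps being routine bookkeeping.
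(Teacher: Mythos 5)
Your proposal is correct and follows essentially the same route as the paper: the paper's proof also reduces to Corollary \ref{cor:isogrnonamenfincen} by noting that a $C^*$-simple group is non-amenable and has trivial center, citing \cite{Harpe2007simple} for both facts. Your amenable-radical argument simply makes explicit the standard reason behind the triviality of the center that the paper leaves to the citation.
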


\begin{proof}
In this case the center is trivial and
the group is non-amenable (see e.g.
\cite{Harpe2007simple}).
\end{proof}

Now we can give a more advanced example
of a non $S_\infty$-group. Namely, consider
Osin's group \cite{Osin}. This is a non-residually
finite exponential growth group with two conjugacy
classes. Since it is simple, it is not $S_\infty$
by Theorem \ref{teo:SandRfinZ}.

 \section{Examples}\label{sec:AppList}

\subsection{Known examples of $R_\infty$ groups}
It was shown by various authors that the following groups
have the $R_\infty$-property:

\begin{enumerate}
\item\label{it:gromov} non-elementary Gromov hyperbolic groups
\cite{FelPOMI,ll}; relatively hyperbolic groups \cite{f07};

\item \label{it:BS} Baumslag-Solitar groups $BS(m,n)$
except for $BS(1,1)$ \cite{FelGon08Progress}, generalized
Baumslag-Solitar groups, that is, finitely generated groups which
act on a tree with all edge and vertex stabilizers infinite cyclic
\cite{LevittBaums}; the solvable generalization $\G$ of $BS(1,n)$
given by the short exact sequence $1 \rightarrow \mathbb
Z[\frac{1}{n}] \rightarrow \G \rightarrow \mathbb Z^k \rightarrow
1,$ as well as any group quasi-isometric to $\G$ \cite{TabWong};

\item \label{it:SWB}
a wide class of saturated weakly branch groups (including the
Grigorchuk group \cite{GrFA} and the Gupta-Sidki group
\cite{GuSi}) \cite{FelLeonTro}, Thompson's group $F$ \cite{bfg};
generalized Thompson's groups $F_{n,\:0}$ and their finite direct
products \cite{GK};

\item \label{it:braids} symplectic groups $Sp(2n,\mathbb Z)$,
the mapping class groups $Mod_{S}$ of a compact oriented surface
$S$ with genus $g$ and $p$ boundary components, $3g+p-4>0$, and
the full braid groups $B_n(S)$ on $n>3$ strings of a compact
surface $S$ in the cases where $S$ is either the compact disk $D$,
or the sphere $S^2$ \cite{dfg}, some classes of Artin groups of
infinite type \cite{juhasz};

\item\label{lingrext}   extensions
of  $SL(n,\Z)$, $PSL(n,\Z)$, $GL(n,\Z)$, $PGL(n,\Z)$, $Sp(2n,\Z)$,
$PSp(2n,\Z)$, $n>1$, by a countable abelian group, and normal
subgroup of $SL(n,\Z)$, $n>2$, not contained in the centre
 \cite{MubeenaSankaran1111.6181};

\item\label{it:linN}  $GL(n,K)$
and $SL(n,K)$ if $n>2$ and
$K$ is an infinite integral domain
with trivial group of automorphisms, or
$K$ is an integral domain, which has
a zero characteristic and for which $\Aut(K)$ is torsion
\cite{Nasybullov1201.6515};

\item\label{it:semisla}
irreducible lattice in a connected semi
simple Lie group $G$ with finite center and real rank
at least 2 \cite{MubeenaSankaran1201.4934};

\item\label{it:somemetab}  some metabelian
groups of the form ${\mathbb Q}^n\rtimes \mathbb Z$
and ${\mathbb Z[1/p]}^n\rtimes \mathbb Z$ \cite{FelGon2011Q};
lamplighter groups
$\mathbb Z_n \wr \mathbb Z$ if and only if $2|n$ or
$3|n$ \cite{gowon1};
 free nilpotent group  $N_{rc}$ of rank $r=2$ and class $c\ge 9$
 \cite{GoWon09Crelle}, $N_{rc}$ of rank
$r = 2$ or $r = 3$ and class $c \geq 4r,$ or rank $r \geq 4$ and
class $c \geq 2r,$ any group $N_{2c}$ for $c \geq 4$, every free
solvable group $S_{2t}$ of rank 2 and class $t \geq 2$ (in
particular the free metabelian group $M_2 = S_{22}$ of rank 2),
any free solvable group $S_{rt}$ of rank $r \geq 2$ and class $t$
big enough \cite{Romankov}; some crystallographic groups
\cite{DePe2011,LutScze2011}.

\end{enumerate}

\subsection{Coverage by Theorem B}
Coverage of items (\ref{it:gromov}) and (\ref{it:semisla}) is
discussed in Introduction. Non-elementary (residually finite)
relatively hyperbolic without finite normal subgroups (in
particular, hyperbolic without torsion) groups are $C^*$-simple
\cite{ArzhanMinas2007JFA,Harpe88CRAS}. Thus their f.g. normal
subgroups are $R_\infty$ groups by Proposition
\ref{prop:rinftysubgrcsimple}.

This observation can be extended to the following large class
introduced in \cite{DGO}. Suppose that a residually finite
finitely generated group $G$ contains a non-degenerate
hyperbolically embedded subgroup \cite{DGO}. Then it is
non-amenable \cite[Theorem 8.5(b)]{DGO}, hence, $R_\infty$ by
Theorem B.\footnote{D.Osin has communicated to us that he has
another proof that groups with non-degenerate hyperbolically
embedded subgroup are $R_\infty$ groups.} If $G$ enters conditions
of \cite[Therem 8.11]{DGO}, for example, if $G$ has no nontrivial
finite normal subgroups, the $R_\infty$ property holds for any its
finitely generated residually finite subgroup by Proposition
\ref{prop:rinftysubgrcsimple}.

 Items (\ref{it:BS}), (\ref{it:SWB}),
(\ref{it:somemetab})
 are not
covered.

Most part of groups from item (\ref{lingrext}) is covered
by Theorem B.
Linear groups are residually finite (using $\Z\to \Z/k\Z$
on elements). Taking the quotient by the center commutes
with this map. Thus, projective linear groups are residually
finite as well. They are finitely generated and non-amenable.
Indeed, all of them have the free group $F_2$ as a subgroup
(for linear groups this is a direct fact
and the center evidently does not meet this $F_2$, hence
for the projective as well).

Groups from (\ref{it:braids}) are covered by Theorem B. For the
symplectic group it is explained above. Braid groups
$B_n=B_n(D^2)$ are residually finite finitely generated. Indeed,
one can use either Artin's representation in $\Aut(F_n)$ (see
\cite[2.19--2.20]{Paris}), or to use the exact sequence $ B_n
\leftarrow N_{1,n} \leftarrow \dots \leftarrow N_{n,n}=e
$
with all quotients being free groups except of the first one which
is the (finite) symmetric group (see, e.g. \cite{Markov45}). This
sequence implies non-amenability of $B_n$ for $n\ge 3$.

Mapping class groups for compact oriented surfaces are residually
finite \cite{Grossman}  and  are non-amenable for $3g+p-4>0$.
Indeed, these groups act on hyperbolic space
\cite{MasurMinsky,Bowditch} in such a way that
\cite{BestvinaFujiwara} is applicable. Some other cases can be
added after an individual analysis. The case of $B_n(S^2)$ follows
from Proposition \ref{prop:usilennonamen} applied to
$$
1\to \Z_2 \to B_n(S^2) \to Mod_{S^2_n}\to 1,
$$
where $S^2_n$ is the sphere with $n$ boundary components (see
\cite{Scott70}).

Under the supposition of f.g. the most part of groups from
(\ref{it:linN}) is covered by Theorem B being residually finite
(by Mal'cev's theorem) and containing $F_2\ss SL(2,\Z)$ for
characteristic 0. For a finite characteristic one can find an
embedded free product $\Z_m * \Z_n$ (in the cases covered by
\cite[Theorem 10.3]{Cohn1966IHES}). For characteristic $\ge 3$ one
has $m-1\ge 2$, $n-1\ge 2$, and $\Z_m * \Z_n$ is a Powers group,
in particular, non-amenable (see \cite[Corollary
12]{Harpe2007simple}).

The new classes, related to this item are pure braid groups (as it
is explained in Introduction) and (full) mapping class groups for
compact non-orientable surfaces (with some low-genus exclusions).
Indeed, let $N$ be a compact non-orientable surface, such that its
orienting cover $S$ is neither a sphere with $\le 4$ boundary
circles, nor a torus with $\le 2$ boundary circles, nor a closed
surface of genus $2$. Then $Mod_S$ is $C^*$-simple
\cite{BridsonHarpe2004JFA} and residually finite. Hence, $Mod_N$
is non-amenable \cite[Prop. 3]{Harpe2007simple} and residually
finite being a subgroup of $Mod_S$ (see, e.g. \cite[Sect.
2.1]{Atalan2010}).

Another new class is braid groups $B_n(\Sigma)$ and pure braid
groups $P_n(\Sigma)$ of a compact connected oriented surface
$\Sigma$ (for $n\ge 2$). Remind, that $P_n(\Sigma)$ is defined as
the fundamental group of $
F_n(\Sigma):=\{(x_1,\dots,x_n)\in\Sigma^n\: |\: x_i \ne x_j \mbox{
if } i\ne j\}. $ The symmetric group $S_n$ acts on $F_n(\Sigma)$
by permutation of coordinates, and the fundamental group of the
orbit space is the braid group $B_n(\Sigma)$. We obtain a regular
$n!$ fold covering and the exact sequence
$$
1\to P_n(\Sigma) \to B_n(\Sigma) \to S_n \to 1.
$$
Thus, Theorem B is applicable to $B_n(\Sigma)$ if and only if it
is applicable to $P_n(\Sigma)$. We have (\cite{Birman69}, see also
\cite{ParisRolfsen99}) an epimorphism $P_m(\Sigma)\to
\pi_1(\Sigma)^m$. If $\Sigma$ has genus $\ge 1$ and a border, then
$\pi_1(\Sigma)$ is free non-abelian (homotopicaly 1-dimensional
complex with at least 2 loops). Thus $P_n(\Sigma)$ is non-amenable
in this case. For a closed surface (still different from $S^2$ and
$\R P^2$) one can consider the short exact sequence from
\cite{FadellNeuwirth62}
$$
1\to \pi_1(\Sigma\setminus \{x_1,\dots,x_{n-1}\},x_n)\to
P_n(\Sigma)\to P_{n-1}(\Sigma)\to 1
$$
to prove non-amenability for $n\ge 3$ (now a free group is a
subgroup). Non-amenability, as well as the residual finiteness in
the case of a closed surface can be obtained from \cite[Theorem
5]{GoncGuasc034} (the residual finiteness can be also obtained by
\cite{BardBering}). The residual finiteness for surfaces with
boundary follows inductively from  \cite[Theorem 6]{GoncGuasc034}
and \cite[Theorem 7(b)]{mille-71}.  The several remaining cases
should be treated individually.

Artin's groups $A_\G$ (for a graph $\G$) are naturally included in
the exact sequence
$$
0\to PA_\G \to A_\G \to W_\G\to 0,
$$
where $PA_\G$ is the corresponding pure Artin's group and $W_\G$
is the corresponding Coxeter group. Evidently, $A_\G$ is
non-amenable (containing the free group). If it is residually
finite (e.g. if it is of finite (or spherical) type (i.e.
$|W_\G|<\infty$), or $W_\G$ is free abelian, see \cite[Theorem
7(c)]{mille-71}), we can apply Theorem B. Also, $A_\G$ has the
$R_\infty$ property, if $PA_\G$ is a characteristic subgroup and
$W_\G$ is neither finite nor almost abelian, because (as we have
proved above) in this case $W_\G$ has the $R_\infty$ property.
Actually, it is proved that $PA_\G$ is characteristic for the
finite type case \cite{NunoParis2006}. To compare this with
\cite{juhasz} is a separate problem.

\subsection{Known examples of $R(\phi)<\infty$,
Reidemeister spectrum, and Theorem A} Now we would like to list
some cases, where Theorem A is applicable.

Following \cite{Romankov}, define the {\it Reidemeister spectrum
of} $G$ as
$$
Spec(G):=\{k\in \N \cup \infty \: | \: R(\phi)=k \mbox{ for some }
\phi \in \Aut(G)\}.
$$
In particular, $R_\infty \Leftrightarrow Spec(G)=\{\infty\}$.

 It is easy to see that
  $Spec({\mathbb Z}) = \{2\} \cup \{ \infty \}$,
and,  for $n \geq 2$, the spectrum is full, i.e. $ Spec({\mathbb
Z}^n) = {\mathbb N} \cup \{\infty \}$. For free nilpotent groups
we have the following: $Spec(N_{22}) = 2{\mathbb N} \cup \{\infty
\}$ ($N_{22}$ is the discrete Heisenberg group)
\cite{Indukaev,FelIndTro,Romankov},  $ Spec(N_{23}) = \{ 2k^2 | k
\in {\mathbb N}\} \cup \{\infty \}$ \cite{Romankov} and $ Spec
(N_{32}) = \{2n - 1| n \in {\mathbf N}\} \cup \{ 4n | n \in
{\mathbf N}\} \cup \{ \infty \}$ \cite{Romankov}.

 Metabelian non-polycyclic groups have quite interesting
 Reidemeister spectrum \cite{FelGon2011Q}:
  for example, if  $\theta(1)$ is of the form
$
\begin{pmatrix}
r& 0 \\
0 & s
\end{pmatrix}
,$ then we have the following cases:

a) If  $r=s=\pm 1$ then $Spec(\mathbb Z[1/p]^2\rtimes_{\theta}
\mathbb Z)=\{2n |  n\in \mathbb N, \ (n,p)=1    \}\cup \{\infty\}$
where $(n,p)$   denote the greatest common divisor of $n$ and $p$.

b) If $r=-s=\pm 1$ then  $Spec( \mathbb
Z[1/p]^2\rtimes_{\theta}\mathbb Z)=\{ 2p^l(p^k\pm 1), 4p^l | l,
k>0\} \cup\{  \infty\}$.

c) If  $rs=1$ and $|r|\ne 1$ then   $Spec(\mathbb
Z[1/p]^2\rtimes_{\theta}\mathbb Z)=\{ 2(p^l \pm 1), 4 |l>0 \}
\cup\{ \infty \}$.

d) If either $r$ or $s$
does not have module equal to one,
and $ rs \ne 1$ then
  $Spec(\mathbb Z[1/p]^2\rtimes_{\theta}\mathbb
Z)=\{\infty\}$.

An interesting case was studied in \cite{FelTroVer}, where the
Reidemeister number and the number of fixed points of $\wh\phi$
where compared directly. In this example $G$ is a semidirect
product of $\Z^2$ and $\Z$ by Anosov automorphism defined by the
matrix $\scriptsize\Mat 2111$. $G$ is solvable and of exponential
growth. The automorphism $\phi$ is defined by $\scriptsize\Mat
01{-1}0$ on $\Z^2$ and as $-\Id$ on $\Z$.

Now we will show examples of some groups with extreme properties,
where $R(\phi)<\infty$ but Theorem A is not only unapplicable, but
the statement is wrong. Namely, Osin's group \cite{Osin} has
exactly 2 ordinary conjugacy classes, thus it is not residually
finite. Hence, by Mal'cev's theorem it is not linear. Thus, it has
no finite-dimensional representations without kernel. Since it is
simple, there is no representations with non-trivial kernel (only
the trivial 1-dimensional representation). Thus, $R(\text{Id})=2$,
but the (fixed) finite-dimensional representation is unique. A
similar argument works for Ivanov's group \cite[Theorem
41.2]{olsh} and some HNN-extensions \cite{HNN,serrtrees} (see
\cite{feltroKT} for details).

\def\cprime{$'$} \def\cprime{$'$} \def\cprime{$'$} \def\cprime{$'$}
  \def\cprime{$'$} \def\cprime{$'$} \def\cprime{$'$} \def\dbar{\leavevmode\hbox
  to 0pt{\hskip.2ex \accent"16\hss}d} \def\cprime{$'$} \def\cprime{$'$}
  \def\polhk#1{\setbox0=\hbox{#1}{\ooalign{\hidewidth
  \lower1.5ex\hbox{`}\hidewidth\crcr\unhbox0}}} \def\cprime{$'$}
  \def\cprime{$'$} \def\cprime{$'$} \def\cprime{$'$}

\end{document}